\documentclass[11pt]{article}
\usepackage{amsmath,amssymb,amsthm}
\usepackage{fullpage}
\usepackage[colorlinks=true,linkcolor=blue,citecolor=blue]{hyperref}
\usepackage{seqsplit}
\usepackage{enumitem}

\newtheorem{theorem}{Theorem}[section]
\newtheorem{lemma}[theorem]{Lemma}
\newtheorem{proposition}[theorem]{Proposition}
\newtheorem{corollary}[theorem]{Corollary}

\theoremstyle{definition}
\newtheorem{definition}[theorem]{Definition}

\theoremstyle{definition}
\newtheorem{conjecture}[theorem]{Conjecture}

\theoremstyle{remark}
\newtheorem{remark}[theorem]{Remark}

\newcommand{\C}{\mathcal{C}}
\newcommand{\Q}{\mathbb{Q}}
\newcommand{\Z}{\mathbb{Z}}

\newcommand{\Tr}{\operatorname{Tr}}
\newcommand{\Gal}{\operatorname{Gal}}

\title{A trace lower bound for totally positive units in $\mathbb{Q}(\zeta_p)^+$\\ and its application to fusion category}
\author{Yue Meng and Zhiqiang Yu}

\begin{document}
\date{}

\maketitle
\begin{abstract}
Let $p = 4q + 1$ be a prime integer, where $q$ is an odd prime.
Let $L = \mathbb{Q}(\zeta_p)^+$ denote the maximal totally real subfield of the $p$-th cyclotomic field $\mathbb{Q}(\zeta_p)$. In this paper,  we prove that if $U_L^+=U_L^2$ (i.e., every totally positive unit is a square in $L$), then for any non-trivial subfield $K \subseteq L$ and any non-trivial totally positive unit $\varepsilon \in U_K^+$, the absolute trace satisfies the sharp lower bound $\Tr_{K/\mathbb{Q}}(\varepsilon) \ge p - 4$. As an application, we show that   a spherical fusion category $\C$ with global dimension $\dim(\mathcal{C})=p=4q+1$ is pointed, where  $2$ is a primitive root modulo $q$, or $p=4q+1<2^{22}$ and $p\neq 29$.
\end{abstract}

{\small\quad\textbf{Keywords}:  Spherical fusion category; Global dimension; Totally positive unit}
\section{Introduction}
A fusion category over the complex field $\mathbb{C}$  is a semisimple finite tensor category. Fusion categories have two important concepts: the Frobenius-Perron dimension and global (or categorical) dimension.  Structure and properties of fusion categories  with restrictions on their Frobenius-Perron dimension   are widely studied, see \cite{ENO2005,EGNO}. However, unlike the   Frobenius-Perron dimension, currently, we even don’t know the structures of fusion categories with a prime global dimension.

Recently,  progress was made in topics related to the  global dimension of fusion category, see \cite{Ostrik2015,Schopieray2021,Schopieray2026,Yu2023} and the references therein for details.  It was asked in \cite[Question 3.8]{Yu2022}  whether a spherical braided fusion category of prime global dimension is pointed (i.e., all simple objects have Frobenius-Perron dimension $1$) or equivalent to $\text{Fib}\boxtimes\text{Fib}^\sigma$, where $\text{Fib}$ and $\text{Fib}^\sigma$ are  the Fibonacci fusion category and its Galois conjugate with global dimensions being $\frac{5+\sqrt{5}}{2}$ and $\frac{5-\sqrt{5}}{2}$, respectively, which was proved in \cite{Schopieray2021}. 
Moreover,  spherical fusion categories of prime dimension $p=2q+1$ are   pointed if $q$ is also an odd prime \cite[Theorem 5.5]{Schopieray2021}. Meanwhile, non-simple modular fusion categories of global  dimension $p^2$ are completely classified in \cite[Theorem 4.14]{Yu2023}.

Let $\C$ be a spherical fusion category of global dimension $p$, where $p$ is prime. One of the main techniques used in \cite[Theorem 5.5]{Schopieray2021} is by showing the Frobenius-Perron dimension (generally, formal codegrees \cite{Ostrik2009}) of $\C$ equals to $p\cdot u$ for some algebraic unit $u\in\mathbb{Q}(\zeta_p)$ (see \cite[Lemma 5.1]{Schopieray2021}).  Together with the Siegel's trace theorem \cite[Theorem III]{Siegel1945},  the relation of formal codegrees   \cite[Proposition 2.10]{Ostrik2015} shows that $\C$ must be pointed if $p=2q+1$, where $q$ is an odd prime. 
Inspired by the result of \cite{Schopieray2021}, in this paper, we continue to consider the structure of $\C$ for the case $p=4q+1$, where $q$ is also an odd prime. 
However,  it turns out that the Siegel's trace bound is insufficient to show the pointedness of $\C$ for  $p=4q+1$, see \cite[Example 5.7]{Schopieray2021} and Lemma \ref{13ranklemma} below. Notice that the formal codegrees of fusion categories are totally positive \cite{Ostrik2009}, so we first  pay our  attentions to give a new lower bound for totally positive algebraic units in $\mathbb{Q}(\zeta_p)$. 
Namely,  we consider the trace of totally positive units $\varepsilon \in U_K^+$   for any subfield $\mathbb{Q} \subsetneq K \subseteq L:=\mathbb{Q}(\zeta_p)^+$.  

Our first main result is stated as follows.
\begin{theorem}\label{maintheorem1}
Let $p = 4q + 1$ be a prime integer, where $q$ is an odd prime integer. Let $L = \mathbb{Q}(\zeta_p)^+$, and let $K \neq \mathbb{Q}$ be any totally real subfield of $L$. If $U_L^+=U_L^2$ (i.e., every totally positive unit is a square in $L$), then for any non-trivial totally positive unit $1 \neq \varepsilon \in U_K^+$, we have
\begin{equation*}
\Tr_{K/\mathbb{Q}}(\varepsilon) \ge p - 4.
\end{equation*}
Furthermore, this lower bound is sharp and is attained in the subfield of degree $q$.
\end{theorem}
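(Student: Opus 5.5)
Since $[L:\Q]=(p-1)/2=2q$ and $\Gal(L/\Q)$ is cyclic of order $2q$, the nontrivial subfields $K\subseteq L$ are exactly the quadratic field $\Q(\sqrt{p})$, the field $K_q$ of degree $q$, and $L$ itself. I will treat these three cases separately, reducing everything to the hypothesis $U_L^+=U_L^2$ and then to a trace estimate for squares.

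\textbf{Step 1 (reduction to squares).} Let $1\neq\varepsilon\in U_K^+$. Since $U_K^+\subseteq U_L^+=U_L^2$, we can write $\varepsilon=\eta^2$ with $\eta\in U_L$. Then
\[
\Tr_{L/\Q}(\varepsilon)=[L:K]\,\Tr_{K/\Q}(\varepsilon),
\qquad
\Tr_{K/\Q}(\varepsilon)=\frac{1}{[L:K]}\sum_{\sigma\in\Gal(L/\Q)}\sigma(\eta)^2 .
\]
I will use the basis of $\Z[\zeta_p+\zeta_p^{-1}]=\mathcal O_L$ given by $\theta_j=\zeta^j+\zeta^{-j}$ for $1\le j\le 2q$. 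Writing $\eta=\sum_{j} a_j\theta_j$, a direct computation of $\sum_\sigma\sigma(\eta)^2$ from the orthogonality of the characters of $\zeta_p$ expresses the trace as a positive definite quadratic form in the $a_j$. Concretely, $\Tr_{L/\Q}(\eta^2)=p\sum_j a_j^2-(\sum_j a_j)^2$ up to the exact normalization, which I will verify. The quantity $\Tr(\eta^2)$ is then a lattice norm in a scaled copy of the $A_{p-1}$-type lattice, and nonzero vectors have norm at least of order $p$. The key bound I aim for is that $\eta\ne\pm1$ (equivalently $\varepsilon\neq1$) forces $\Tr_{L/\Q}(\eta^2)\ge c\,p$ for an explicit $c$. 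I will do this by splitting on the number $m$ of nonzero $a_j$. The inequality $p\sum a_j^2-(\sum a_j)^2\ge pm-m^2$ gives a good lower bound unless $m$ is close to $0$ or close to $p$. The case where $\eta$ is a unit close to a rational integer has to be ruled out using $N(\eta)=\pm1$.

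\textbf{Step 2 (case split by $K$).}
\begin{itemize}[leftmargin=*]
\item \emph{$K=L$.} Here the Step 1 bound must directly give $\Tr_{L/\Q}(\varepsilon)\ge p-4$.
\item \emph{$[K:\Q]=q$.} Here $\Tr_{K/\Q}(\varepsilon)=\tfrac12\Tr_{L/\Q}(\eta^2)$, so I need $\Tr_{L/\Q}(\eta^2)\ge 2p-8$. For this case I would instead write $\varepsilon$ itself in the Gaussian-period basis of $K_q$, whose periods have length $2$. This yields a quadratic form whose minimum on nontrivial units should be $p-4$. Sharpness would come from an explicit unit: I expect a square such as $(\theta_a/\theta_b)^2$, a cyclotomic unit, lying in $K_q$ with trace exactly $p-4$. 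Taking $\eta=\theta_{2}/\theta_1=\zeta+\zeta^{-1}$ style quotients should give $\Tr$ equal to $2p-8$ over $L$.
\item \emph{$K=\Q(\sqrt p)$.} The fundamental unit has large norm behaviour, and a unit of $K$ with trace $t<p-4$ satisfies $t^2-4=p\,s^2$. Using that $\varepsilon$ is a square in $L$, and hence, by a Galois argument, that $\pm\varepsilon$ or $p\varepsilon$ is a square in $K$, I get $\varepsilon=u^2$ with $\Tr(u^2)=\Tr(u)^2\mp2$. This is large unless $u$ is tiny, and tiny $u$ are excluded since $u^2-tu\pm1$ has discriminant divisible by $p$.
\end{itemize}

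\textbf{Main obstacle.} The hardest part is the sharp lower bound in the degree-$q$ case, namely showing that no square of a unit other than the extremal ones has $\Tr_{L/\Q}(\eta^2)<2p-8$. My plan there is to analyze the integer vectors $(a_j)$ attaining small values of the quadratic form, which are vectors supported on few coordinates or complements of such. For these I will check directly which of them are units with $\eta^2\in K_q$, using the action of the order-$2$ automorphism, which permutes the $\theta_j$ and must fix $\eta^2$, hence send $\eta$ to $\pm\eta$.
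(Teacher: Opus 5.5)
There is a genuine gap. Your case split is right, and your $\Q(\sqrt p)$ case is essentially fine: $\eta\in K$ outright since $[L:K]=q$ is odd, and then $\Tr(\eta^2)=pk^2\pm2\ge p-2$, as in the paper. However, the two substantive claims, the sharp bound on $K_q$ and its attainment, are not established.

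\emph{The bound on $K_q$.}
\begin{itemize}
\item The exact form is $\Tr_{L/\Q}(x^2)=p\sum_j a_j^2-2\bigl(\sum_j a_j\bigr)^2=\frac{p}{2q}\sum_{j<k}(a_j-a_k)^2+\frac{1}{2q}\bigl(\sum_j a_j\bigr)^2$. Use this variance decomposition, not a split by the number of nonzero $a_j$; that split degenerates when most entries are nonzero, and no norm condition is needed. It gives $\Tr_{L/\Q}(\eta^2)\ge p-2$ for every $\eta\in\mathcal O_L\setminus\{0,\pm1\}$, which settles $K=L$.
\item But $p-2$ is attained by the units $\theta_j$. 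So for $K_q$ this only yields $\Tr_{K_q/\Q}(\varepsilon)=\frac12\Tr_{L/\Q}(\eta^2)\ge\frac{p-2}{2}$.
\item Everything therefore hinges on exploiting $\eta^2\in K_q$, and you stop exactly there with an unspecified search over short vectors. Writing $\varepsilon$ itself in the period basis does not help, because the trace is linear in those coordinates.
\item The paper proves $\eta\in K_q$. Otherwise $L=K_q(\sqrt\varepsilon)=K_q(\sqrt p)$, so $\varepsilon=pw^2$ with $w\in K_q$, and $1=p^q N_{K_q/\Q}(w)^2$, which is impossible for odd $q$.
\item Then $\eta=\sum_i d_iE_i$ in the length-two Gauss periods, and $\Tr_{K_q/\Q}(\eta^2)=p\sum_i(d_i-\bar d)^2+T^2/q$ with $T=\sum_i d_i$. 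For non-constant $d$, $q\sum_i(d_i-\bar d)^2=\sum_{i<j}(d_i-d_j)^2\ge q-1$. This gives $\ge p-4$ if $T\ne0$ and $\ge 2p$ if $T=0$.
\item Your own dichotomy $\sigma(\eta)=\pm\eta$ would also work if you carried it out. The $+$ case is exactly this computation. In the $-$ case, the fixed-point-free involution $\pi$ by which $\sigma$ permutes the $\theta_j$ gives $a_{\pi(j)}=-a_j$. That forces $\sum_j a_j=0$, so $\Tr_{K_q/\Q}(\varepsilon)=\frac p2\sum_j a_j^2\ge p$.
\end{itemize}

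\emph{Sharpness.} Your candidate fails. Let $r^2\equiv-1\pmod p$. Then $(\theta_a/\theta_b)^2\in K_q$ means $\theta_{ra}\theta_b=\pm\theta_a\theta_{rb}$. Expanding via $\theta_x\theta_y=\theta_{x+y}+\theta_{x-y}$ and comparing coefficients in the basis shows this forces $\theta_a=\theta_b$. The Gauss-period formula above shows equality $p-4$ holds only for $\eta=\pm E_i$, a single Gauss period, and this is exactly the paper's witness. With $2m\equiv1$, the unit $\eta_q=\theta_m\theta_{rm}=\theta_{m(1+r)}+\theta_{rm(1+r)}$ lies in $K_q$. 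Then $\varepsilon_q=\eta_q^2=(2+\theta_1)(2+\theta_r)$ has trace $p-4$.
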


The following corollary is a direct result of \cite[Proposition 1] {Dummit2020}, \cite[Theorem 4.4] {KimLim2007} and \cite[Proposition 3.1]{FujimaIchimura2018}. 
\begin{corollary}\label{1-2}
Let $p = 4q + 1$ be a prime integer, where $q$ is an odd prime integer. Let $L = \mathbb{Q}(\zeta_p)^+$, and let $K \neq \mathbb{Q}$ be any totally real subfield of $L$. If 
\begin{itemize}
    \item $2$ is a primitive root modulo $q$, or
    \item $p=4q+1<2^{22}$ and $p\neq 29$,
\end{itemize}
then for any non-trivial totally positive unit $1 \neq \varepsilon \in U_K^+$, we have
$
\Tr_{K/\mathbb{Q}}(\varepsilon) \ge p - 4.
$
\end{corollary}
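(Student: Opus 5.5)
The corollary should follow directly from Theorem \ref{maintheorem1}. The plan is to check that, under either hypothesis, the condition $U_L^+=U_L^2$ holds for $L=\mathbb{Q}(\zeta_p)^+$, and then quote the theorem. So the whole argument reduces to verifying $U_L^+=U_L^2$ in the two listed cases, using the three cited results.

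\textbf{First case:} $2$ is a primitive root modulo $q$. We have $[L:\mathbb{Q}]=(p-1)/2=2q$. By \cite[Proposition 1]{Dummit2020} (and the related criterion in \cite[Theorem 4.4]{KimLim2007}), $U_L^+=U_L^2$ is equivalent to the $2$-part of the class number of $L$ being trivial, together with a signature-rank condition on the units. Under our hypothesis, these criteria give the following chain of consequences:
\begin{itemize}
\item Since $2$ is primitive modulo $q$, the multiplicative order of $2$ modulo $q$ is $q-1$.
\item Hence the relevant cyclotomic characters of the degree-$q$ subfield $F$ of $L$ have $2$-adic components that are as large as possible.
\item This forces the $2$-part of the class group of $F$ to vanish, and likewise that of $L$, using that $[L:F]=2$ and $p$ is the only ramified prime.
\item It also forces the unit signature map to be surjective, i.e., units of independent signs exist.
\end{itemize}
Combined with the criterion, this gives $U_L^+=U_L^2$. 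I will cite the exact statement of \cite[Theorem 4.4]{KimLim2007}, which concerns the parity or $2$-rank of class numbers of real cyclotomic fields of this form.

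\textbf{Second case:} $p<2^{22}$ and $p\ne 29$. Here I will invoke \cite[Proposition 3.1]{FujimaIchimura2018}. It records a computational verification that, for primes in this range, the class number $h_p^+$ is odd and every totally positive unit of $\mathbb{Q}(\zeta_p)^+$ is a square, with $p=29$ the exception (there $h^+_{29}$ is odd, but a totally positive nonsquare unit exists). Alternatively, their result directly gives surjectivity of the signature map for these primes.

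In both cases $U_L^+=U_L^2$, so Theorem \ref{maintheorem1} applies to any nontrivial subfield $K\subseteq L$ and yields $\Tr_{K/\mathbb{Q}}(\varepsilon)\ge p-4$ for every nontrivial $\varepsilon\in U_K^+$.

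The main obstacle is matching the precise hypotheses of the cited results to the condition $U_L^+=U_L^2$. In particular, Dummit's proposition relates this to the oddness of the narrow versus ordinary class number, i.e.\ $h^+(L)=h(L)$ with $h$ odd. The first case also needs the step from primitivity of $2$ modulo $q$ to the triviality of the $2$-part.
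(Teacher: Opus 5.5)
Your reduction (establish $U_L^+=U_L^2$ under each hypothesis, then apply Theorem~\ref{maintheorem1}) is exactly the paper's. The step certifying $U_L^+=U_L^2$, however, has a genuine gap.

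\textbf{First case.} The criterion you attribute to \cite{Dummit2020} is misstated. $U_L^+=U_L^2$ is equivalent to surjectivity of the signature map $U_L\to\{\pm1\}^{2q}$ alone, i.e.\ narrow class number of $L$ equals class number. Oddness of $h(L)=h_p^+$ does not imply it: your own example $p=29$ has $h_{29}^+=1$ but $U_L^+\neq U_L^2$. So your bullet ``it also forces the unit signature map to be surjective'' carries the entire content and is unsupported.

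The preceding step is also false as a general principle. Primitivity of $2$ modulo $q$ only gives $\mathbb{F}_2[\mathbb{Z}/q\mathbb{Z}]\cong\mathbb{F}_2\times\mathbb{F}_{2^{q-1}}$. Hence the relevant $2$-ranks (of the class group of $F$, and of $U_F^+/U_F^2$) are multiples of $q-1$, not zero. For example, the cyclic cubic field of conductor $163$ has class number $4$, although $2$ is primitive modulo $3$.

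The paper does not argue this way. It quotes \cite[Theorem 4.4]{KimLim2007} as giving $[U_L^+:U_L^2]=1$ directly for $p=4q+1$ with $2$ primitive modulo $q$. That, and not a statement about parity of class numbers of real fields, is what you need from the citation.

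\textbf{Second case.} The missing idea is that the relevant parity is that of the \emph{relative} class number $h_p^-$ of $\mathbb{Q}(\zeta_p)$. As the paper uses it, \cite[Proposition 3.1]{FujimaIchimura2018} gives $h_p^-$ odd for $p=4q+1<2^{22}$, $p\neq 29$ (indeed $h_{29}^-=8$). The bridge to units is \cite[Proposition 1]{Dummit2020}: if $h_p^-$ is odd, every totally positive unit of $\mathbb{Q}(\zeta_p)^+$ is a square.

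Classically this works as follows. A totally positive unit of $L$ is a local norm everywhere from $\mathbb{Q}(\zeta_p)$ (at the ramified prime by the product formula), hence a global norm by the Hasse norm theorem. Norms of units of $\mathbb{Q}(\zeta_p)$ are exactly $U_L^2$, since those units are roots of unity times real units. The discrepancy is a $2$-torsion quotient of the ambiguous class group, which is trivial because $h_p^-$ odd forces $h_p$ odd.

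Your proposal instead credits Fujima--Ichimura with oddness of $h_p^+$ and squareness of units, and never uses the implication from $h_p^-$ to units. Oddness of $h_p^+$ alone would not suffice. If you use these two citations for what they actually provide, your argument coincides with the paper's.
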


 As an application of this trace inequality to
fusion categories, we have the following theorem.
\begin{theorem}\label{maintheorem2}
Let $p = 4q + 1$ be a prime integer, where $q$ is an odd prime integer. Let  $\mathcal{C}$ be a spherical fusion category with $\dim(\mathcal{C})=p$.  If 
$2$ is a primitive root modulo $q$, or
 $p=4q+1<2^{22}$ and $p\neq 29$, then $\mathcal{C}$ is pointed.
\end{theorem}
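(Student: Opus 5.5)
We follow the strategy of \cite[Theorem 5.5]{Schopieray2021}, using Corollary \ref{1-2} in place of Siegel's trace theorem. Let $\C$ be spherical with $\dim(\C)=p$ and let $f_0,\dots,f_{n}$ be the formal codegrees of $\C$ (equivalently, of the Drinfeld center $\mathcal{Z}(\C)$, restricted to its simple objects). By \cite[Lemma 5.1]{Schopieray2021} each formal codegree has the form $f=p\cdot u$, where $u$ is an algebraic unit lying in $\mathbb{Q}(\zeta_p)$. Since formal codegrees are totally positive \cite{Ostrik2009}, every Galois conjugate of $u$ is positive. In particular $u$ is fixed by complex conjugation, so $u\in U_K^+$ for $K=\mathbb{Q}(u)\subseteq L$.

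The key relation is \cite[Proposition 2.10]{Ostrik2015}, which gives $\sum_{i}1/f_i=1$. We group the codegrees into Galois orbits. If $u=1$, the orbit is $\{p\}$ and contributes $1/p$. Otherwise the orbit of $u$ has $[K:\mathbb{Q}]$ elements, and its contribution is
\[
\frac1p\sum_{\sigma}\sigma(u)^{-1}=\frac1p\Tr_{K/\mathbb{Q}}(u^{-1}).
\]
Here $u^{-1}$ is again a non-trivial totally positive unit of $K$, so Corollary \ref{1-2} gives $\Tr_{K/\mathbb{Q}}(u^{-1})\ge p-4$. Every non-trivial orbit therefore contributes at least $(p-4)/p$ to a sum that equals $1$.

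We now count orbits. Let $a$ be the number of codegrees equal to $p$ and $b$ the number of non-trivial orbits, so that $a/p+b(p-4)/p\le 1$, i.e. $a+b(p-4)\le p$. The unit object always gives the codegree $\dim(\C)=p$, so $a\ge 1$.

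\begin{itemize}
\item If $b=0$, every formal codegree equals $p=\dim(\C)$. In particular $\operatorname{FPdim}(\C)$, itself a formal codegree, equals $p$.
\item If $b\ge 2$, then $a+2(p-4)\le p$, which forces $p\le 7$. This is impossible, since $p=4q+1\ge 13$.
\item If $b=1$, then $a\le 4$, and the orbit has some size $d=[K:\mathbb{Q}]$ dividing $2q$, i.e. $d\in\{2,q,2q\}$.
\end{itemize}

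The main obstacle is excluding $b=1$. To do so, we write the equation exactly as $a+\Tr_{K/\mathbb{Q}}(u^{-1})=p$ with $1\le a\le 4$. This forces $\Tr_{K/\mathbb{Q}}(u^{-1})\in\{p-4,\dots,p-1\}$, i.e. the trace is within $3$ of the sharp bound.

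Next we use the further structure of formal codegrees. Each $f_i$ divides $\dim(\C)$ in the ring of algebraic integers, the codegrees are indexed by simple objects of $\mathcal{Z}(\C)$, and the total count is $a+d$. Combined with the rank-$13$ analysis (Lemma \ref{13ranklemma}), and with the fact that $a$ counts the invertible-type contributions, which should force $a$ to divide $p$, we get $a=1$. Then $\Tr_{K/\mathbb{Q}}(u^{-1})=p-1$.

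It then remains to show that no totally positive unit of $K$ has trace exactly $p-1$ when $d=2$ or $d=2q$, and to rule out the $d=q$ case through the sharpness analysis in the proof of Theorem \ref{maintheorem1}, where the minimum $p-4$ is attained. If this does not close the case directly, we would instead derive a contradiction from $\operatorname{FPdim}(\C)$. The FP dimension is the largest formal codegree, so in case $b=1$ we would have $\operatorname{FPdim}(\C)=p\cdot\max_\sigma \sigma(u)$. We would then compare this with the identity $\sum_i\operatorname{FPdim}(X_i)^2=\operatorname{FPdim}(\C)$ to get the contradiction.

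Once $b=0$, we have $\operatorname{FPdim}(\C)=\dim(\C)=p$, so $\C$ is pseudo-unitary of Frobenius--Perron dimension $p$. Fusion categories of prime FP dimension are pointed \cite{ENO2005}, which completes the proof.
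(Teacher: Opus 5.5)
\textbf{There is a genuine gap: the case $b=1$ is not handled.} Your reduction up to that point is sound, granted that the multiplicities $\chi(1)$ in \eqref{generalclasseq} are kept, which only helps. This covers the inequality $a+b(p-4)\le p$, the exclusion of $b\ge 2$, and the case $b=0$ via \cite[Corollary 8.30]{ENO2005}.

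The case $b=1$ is the one you yourself call the main obstacle, and it is left open. The claim that $a$ "should divide $p$", forcing $a=1$, has no justification. It also points the wrong way: in the configuration of Lemma \ref{13ranklemma} for $p=13$ one has $a=4$ and $\Tr_{K/\mathbb{Q}}(u^{-1})=9=p-4$. So the trace bound plus counting cannot by itself exclude $b=1$. Your later steps ("it then remains to show\ldots", "we would instead derive a contradiction\ldots") are only announced, not carried out.

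\textbf{The missing idea is a lower bound on $a$.} All formal codegrees lie in $F=\mathbb{Q}(\dim(X):X\in\mathcal{O}(\C))$ \cite[Corollary 2.15]{Ostrik2015}. Hence $[F:\mathbb{Q}]\ge d=[\mathbb{Q}(u_{\mathrm{FPdim}(\C)}):\mathbb{Q}]$. The $[F:\mathbb{Q}]$ distinct homomorphisms $\sigma\circ\dim(-)$ are one-dimensional representations of $\mathrm{Gr}(\C)\otimes_\Z\mathbb{C}$, each with formal codegree $\sigma(p)=p$. So $a\ge d$. Combined with $a\le 4$, this rules out $d\in\{q,2q\}$ once $q>4$. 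That is how the paper concludes; under the hypotheses, $q\ne 3$ forces $q\ge 13$.

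\textbf{Two cases survive even with this idea, and your proposal supplies nothing for either.}
\begin{itemize}
\item \emph{$d=2$.} One gets $a=2$ and $\Tr_{K/\mathbb{Q}}(u^{-1})=p-2$, and this is realized by actual units. For $p=53$, where $2$ is a primitive root mod $13$, the unit $\eta=(7+\sqrt{53})/2$ has norm $-1$ and $\Tr(\eta^2)=51=p-2$. The paper excludes $d=2$ (and $d=2q$) only by citing \cite[Proposition 3.6]{Yu2023}.
\item \emph{$p=13$, i.e.\ $q=3$.} Here $a\ge 3$ is compatible with $a\le 4$, and the sharp unit of Theorem \ref{p-4} yields a genuine candidate. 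The paper disposes of it by the explicit analysis in Lemma \ref{13ranklemma}, Proposition \ref{13dimension} and Theorem \ref{13theorem}. That analysis pins down the FP dimensions $1,u,u,u,u,v,w$ and then derives a contradiction from the orthogonality relations for $\dim(-)$ against the conjugates of $\mathrm{FPdim}(-)$.
\end{itemize}
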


The paper is organized as follows. In section \ref{sectionpreliminary}, we review some basic notions of algebraic number theory and fusion category. In section \ref{sectiontracebound}, for primes $p=4q+1$, we first prove the trace inequality under the assumption that every totally positive unit is a square in $L=\mathbb{Q}(\zeta_p)^+$ (Theorem \ref{maintheorem1}), and then we show that the trace inequality always holds under certain restrictions (Corollary \ref{1-2}). In section \ref{sectionfusion}, we apply to trace the bound to classify spherical fusion category of prime global dimension (Theorem \ref{maintheorem2}).
\section{Preliminaries}\label{sectionpreliminary}
In this section, we recall some basic notions and properties of algebraic number theory, fusion categories.
For further details, readers are referred to \cite{AlacaWilliams2003}, \cite{ENO2005}, \cite{EGNO}, \cite{Shimura2010}, \cite{washington1997}.
\subsection{Cyclotomic fields,   totally positive units}
In this subsection, we recall some elementary results about algebraic number theory, and the following notations will be used throughout this paper. Let \(\overline{\mathbb{Q}}\) denote the algebraic closure of \(\mathbb{Q}\) in \(\mathbb{C}\), any subfield \(K\) of \(\overline{\mathbb{Q}}\) is called an algebraic number field. In this paper, we always assume that $K$ is finite Galois extension of $\mathbb{Q}$.  Let $G=\operatorname{Gal}(K/\mathbb{Q})$.  Then for $\alpha \in K$, we have 
\begin{equation*}
N_{K/\mathbb{Q}}(\alpha)=\prod_{\sigma \in G}\sigma(\alpha), \qquad  \Tr_{K/\mathbb{Q}}(\alpha)=\sum_{\sigma \in G}\sigma(\alpha).
\end{equation*}

Let $K$ be an algebraic number field. An algebraic integer is an element $\alpha\in\mathbb{C}$ which is a root of a monic polynomial with coefficients in $\mathbb{Z}$, we use $\mathcal{O}_K$ denote the ring of all algebraic integers in $K$,  $\mathcal{O}_K$ is a free $\mathbb{Z}$-module of rank $[K:\mathbb{Q}]$. Let $U_K$ denotes the group of algebraic units of $K$, then we have $U_K:=\mathcal{O}_K^{\times}=\{\alpha \in \mathcal{O}_K \mid N_{K/\mathbb{Q}}(\alpha)=\pm1 \}$.

Given a positive integer $n\geq 3$, let $\zeta_n=\exp(2\pi i/n)$ be a primitive $n$-th root of unity. The field $\mathbb{Q}(\zeta_n)$ is called the $n$-th cyclotomic field, which is the main research interest to  the theory of fusion category \cite[Theorem 8.51]{ENO2005}\cite{Ostrik2015}. The Galois  group $\Gal(\mathbb{Q}(\zeta_n)/\mathbb{Q})\cong(\mathbb{Z}/n\mathbb{Z})^\times$, and the maximal real subfield of $\mathbb{Q}(\zeta_n)$ is $\mathbb{Q}(\zeta_n)^+:=\mathbb{Q}(\zeta_n+\zeta_n^{-1})$ and $[\mathbb{Q}(\zeta_n):\mathbb{Q}(\zeta_n)^+]=2$. It follows from Galois theory that $\Gal(\mathbb{Q}(\zeta_n)^+/\mathbb{Q})\cong (\mathbb{Z}/n\mathbb{Z})^\times/\{\pm 1\}$. In the following, we restrict our attention to the case where 
$n$ is prime. If $n=p$ is a prime,  the group of cyclotomic units is the subgroup of \(U_{\mathbb{Q}(\zeta_p)}=\mathcal{O}_{\mathbb{Q}(\zeta_p)}^{\times}\) generated by \(\zeta_p\) and the elements
$
\frac{\zeta_p^{a}-1}{\zeta_p-1}$, for
$ a\in\mathbb{Z}$ and $p\nmid a$ \cite[Lemma 8.1]{washington1997}. 

We restrict to totally real algebraic number fields $K/\mathbb{Q}$ of degree $n$, so that all embeddings are real and traces are ordinary sums of conjugates.  
Let $\sigma_1,\dots,\sigma_n: K \hookrightarrow \mathbb{R}$ be the real embeddings.  
By Dirichlet’s unit theorem \cite[Theorem 13.1.1]{AlacaWilliams2003}, the unit group of the ring of algebraic integers $\mathcal{O}_K$ has the structure
\begin{equation*}
U _K:= \mathcal{O}_K^\times \cong \{\pm 1\}\times \mathbb{Z}^{n-1}.
\end{equation*}
The group of totally positive units is \(U_K^{+}: = \{u \in U _K \mid \sigma(u) > 0\) for all real embeddings \(\sigma \colon K \hookrightarrow \mathbb{R}\)\}. Let $U_K^2$ the subgroup of
the unit squares of $K$. It is easy to see that $U_K^2 \subseteq U_K^{+}$.

\subsection{Fusion categories}
A fusion category over $\mathbb{C}$ is a semisimple finite  tensor category with simple tensor unit $\mathbf{1}$. Let $\mathcal{O}(\C)$ be  the set of isomorphism classes of simple objects of a fusion category $\C$, the cardinality of $\mathcal{O}(\C)$ is called the rank of $\C$, and denoted by $\mathrm{rank}(\C)$. 
And let $\mathrm{Gr}(\mathcal{C})$ denote the Grothendieck ring of $\mathcal{C}$.
Then the Frobenius-Perron homomorphism $\mathrm{FPdim}(-):\mathrm{Gr}(\C)\to\mathbb{C}$ is the unique ring homomorphism such that $\mathrm{FPdim}(X)\geq1$ for all objects $X$ \cite{EGNO}. Moreover,  $\mathrm{FPdim}(X)$ is an algebraic integer and is defined as the Frobenius-Perron dimension of the object $X$. The sum
\begin{equation*}
\mathrm{FPdim}(\C):=\sum_{X\in\mathcal{O}(\C)}\mathrm{FPdim}(X)^2
\end{equation*}
is called  the Frobenius-Perron dimension of $\C$. A fusion category is called pointed if every simple object is invertible, equivalently, $\C$ is tensor equivalent to the category   $\text{Vec}_G^\omega$ of $G$-graded finite-dimensional vector spaces over $\mathbb{C}$, where $G$ is a finite group and $\omega\in Z^3(G,\mathbb{C}^*)$

Let $\C$ be a pivotal fusion category with a pivotal structure $j$, that is, $j$ is a natural isomorphism from identity tensor functor $\text{id}_\C$ to the double dual tensor functor $(-)^{**}$.  Then the quantum dimension of an object $X$ is defined by the (categorical) trace of $\text{id}_X$, that is,
\begin{equation*}
\dim_j(X)=\text{Tr}(\text{id}_X):=\text{ev}_X\circ(j_X\circ \text{id}_X)\otimes \text{id}_{X^*}\circ\text{coev}_X,
\end{equation*}
here we suppress the associativity and   unit constraints  of $\C$.  $\C$ is a spherical  fusion category  if $\dim_j(X)=\dim_j(X^*)$ for all objects $X$ of $\C$ \cite{EGNO}. Then we define the global (or, categorical) dimension of the spherical fusion category $\C$ as
\begin{equation*}
\dim(\C):=\sum_{X\in\mathcal{O}(\C)}\dim_j(X)^2.
\end{equation*}
Notice that $\dim_j(-)$ induces a homomorphism from $\mathrm{Gr}(\C)$ to $\mathbb{C}$
\cite[Proposition 4.7.12]{EGNO}. When \(\dim(\mathcal{C}) = \operatorname{FPdim}(\mathcal{C})\), we say that \(\mathcal{C}\) is pseudo-unitary.

Let $\C$ be a fusion category. Then the complexified ring $\mathrm{Gr}(\C)\otimes_\Z\mathbb{C}$ is semisimple. Given an irreducible representation $\chi$ of $\mathrm{Gr}(\C)\otimes_\Z\mathbb{C}$,  the element
\begin{equation*}
\alpha_\chi=\sum_{X\in\mathcal{O}(\C)}\text{tr}_\chi(X)X^*
\end{equation*}
is central, where $\text{tr}_\chi$ is the ordinary trace function on the representation $\chi$. Moreover, $\chi'(\alpha_\chi)=0$ if $\chi\ncong\chi'$ and $f_\chi:=\chi(\alpha_\chi)$ is a positive algebraic integer \cite{Lusztig}, $f_\chi$ is called a formal codegree of fusion category $\C$ \cite{Ostrik2009}. For example, $\mathrm{FPdim}(\C)$ and $\dim(\C)$ are formal codegrees of $\C$.
It was shown that $\frac{\dim(\C)}{f_\chi}$ is always a totally positive algebraic integer
\cite[Corollary 2.14]{Ostrik2015} and the set of formal codegrees of $\C$ satisfy the following equation \cite[Proposition 2.10]{Ostrik2015}
\begin{align}\label{generalclasseq}
\sum_{\chi}\frac{\chi(1)}{f_\chi}=1,
\end{align}
which plays a key role in the classification of spherical fusion categories, see \cite{Schopieray2021,Yu2023}.
\section{Trace lower bound for totally positive units in $\mathbb{Q}(\zeta_p)^+$}\label{sectiontracebound}
Let $q$ be an odd prime and $p = 4q+1$ be a prime, and $q \ge 3$ is prime. For simplification,  denote $L: = \mathbb{Q}(\zeta_p)^+$.
We begin by detailing the Galois extension $L/\mathbb{Q}$ and the structure of totally positive unit groups in its subfields.
Since $G = \Gal(L/\mathbb{Q}) \cong (\Z/p\Z)^\times / \{\pm 1\}$ is a cyclic group of order $2q$, the nontrivial subfields $K \neq \mathbb{Q}$ fall precisely into the following three cases:
\begin{itemize}
    \item $n = 2q$: $K = L = \mathbb{Q}(\zeta_p)^+$, with relative degree $m = [L:K] = 1$.
    \item $n = q$: $K = K_q$, with relative degree $m = [L:K] = 2$.
    \item $n = 2$: $K = \mathbb{Q}(\sqrt{p})$, with relative degree $m = [L:K] = q$.
\end{itemize}

\subsection{Total positivity and square units}

\begin{theorem}\label{squares}
Let $q$ be an odd prime and $p = 4q+1$ be a prime. Let $L = \mathbb{Q}(\zeta_p)^+$, and let $K$ be the unique subfield of $L$ with degree $[K:\mathbb{Q}] = q$. If $U_L^+=U_L^2$, then every totally positive unit of $K$ is a square in $K$, i.e.,
$
U_K^+ = U_K^2.
$
\end{theorem}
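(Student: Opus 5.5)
The plan is to take a totally positive unit $\varepsilon\in U_K^+$ and use the hypothesis $U_L^+=U_L^2$ on the quadratic extension $L/K$. Since $\varepsilon$ is totally positive in $L$ as well, there is $\eta\in U_L$ with $\eta^2=\varepsilon$, and we must show that $\eta$ (or $-\eta$) already lies in $K$. Let $\tau$ generate $\Gal(L/K)$, the element of order $2$ in the cyclic group $G$ of order $2q$. Then $\tau(\eta)^2=\tau(\varepsilon)=\varepsilon=\eta^2$. Hence $\tau(\eta)=\pm\eta$. If $\tau(\eta)=\eta$ we get $\eta\in K$, which is what we want. So the whole argument reduces to excluding $\tau(\eta)=-\eta$.

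Assume $\tau(\eta)=-\eta$. Then $\eta\notin K$ and $L=K(\eta)=K(\sqrt{\varepsilon})$. Now take norms: $N_{L/K}(\eta)=\eta\,\tau(\eta)=-\eta^2=-\varepsilon$. I will derive a contradiction from this in one of two ways.

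\emph{Sign argument.} The $2q$ real embeddings of $L$ restrict in pairs to the $q$ embeddings of $K$. We have $\sigma(\eta)\tau\sigma(\eta)=\sigma\tau(\eta)\sigma(\eta)$, and because $G$ is abelian this equals $-\sigma(\eta)^2<0$. So $\eta$ has opposite signs under every pair of embeddings lying over a given embedding of $K$. That alone is consistent, so a further step is needed.

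\emph{Norm argument.} Go down to $\mathbb{Q}$: $N_{L/\mathbb{Q}}(\eta)=N_{K/\mathbb{Q}}(-\varepsilon)=(-1)^qN_{K/\mathbb{Q}}(\varepsilon)=-1$, since $q$ is odd and $N_{K/\mathbb{Q}}(\varepsilon)=1$ by total positivity. This is not yet a contradiction either, because $L$ does have units of norm $-1$. The way out is to exploit the other subfield, using $\eta$ times a suitable conjugate.

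Let $\rho$ be an element of order $q$ in $G$, and set $F=\mathbb{Q}(\sqrt p)$, the fixed field of $\langle\rho\rangle$. Consider $\beta=\tau(\eta)/\eta=-1$; this carries no information. Instead consider the ramification-free Kummer extension $L=K(\sqrt{\varepsilon})$. The extension $L/K$ is ramified at the prime above $p$, since $p$ is totally ramified in $\mathbb{Q}(\zeta_p)$. On the other hand, $K(\sqrt{\varepsilon})/K$ with $\varepsilon$ a unit is unramified at odd primes. Since $p$ is odd, this is a contradiction. This is the cleanest route: $p$ is totally ramified in $L/\mathbb{Q}$, so the prime of $K$ above $p$ ramifies in $L$; but adjoining the square root of a unit can only ramify at primes dividing $2$ (the discriminant of $x^2-\varepsilon$ is $4\varepsilon$). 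Hence $\tau(\eta)=-\eta$ is impossible, so $\eta\in U_K$ and $\varepsilon\in U_K^2$. Together with the trivial inclusion $U_K^2\subseteq U_K^+$, this gives $U_K^+=U_K^2$.

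The main obstacle is this ramification step. It requires checking that the relative discriminant of $K(\sqrt\varepsilon)/K$ divides $4$, and that $p$ is totally ramified in $\mathbb{Q}(\zeta_p)^+$. Both are standard facts, so I expect this step to be short.
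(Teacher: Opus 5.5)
Your proof is correct, but it excludes $L=K(\sqrt{\varepsilon})$ by a different key step than the paper.

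\textbf{The paper's route.} The paper identifies $L$ explicitly as $K(\sqrt{p})$, using $\Q(\sqrt p)\subseteq L$ and that $[K:\Q]=q$ is odd. Quadratic Kummer theory then gives $\varepsilon=p\,w^2$ with $w\in K^\times$. Applying $N_{K/\Q}$ yields $1=p^q N_{K/\Q}(w)^2$, which is impossible because the $p$-adic valuation of the right side is odd.

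\textbf{Your route.} You compare ramification instead. Because $p$ is totally ramified in $\Q(\zeta_p)$, the prime $\mathfrak p$ of $K$ over $p$ satisfies $e(\mathfrak P/\mathfrak p)=2$ in $L$. On the other hand, the relative discriminant of $K(\sqrt\varepsilon)/K$ divides $\operatorname{disc}(x^2-\varepsilon)=4\varepsilon$, so only primes over $2$ can ramify. Since $p$ is odd, this is a contradiction.

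\textbf{Comparison.} Both arguments rest on the same phenomenon. The paper's parity computation is really $v_{\mathfrak p}(p)=q$ being odd, pushed down to $\Q$ by the norm; this works because $\mathfrak p$ is the unique prime over $p$ and has residue degree $1$.
\begin{itemize}
\item The paper's version is more elementary. It needs only Kummer theory for quadratic extensions and valuations on $\Q$. However, it uses the oddness of $q$ twice: once to get $L=K(\sqrt p)$ and once for the parity contradiction.
\item Your version needs the discriminant criterion for ramification, but it is more conceptual and more general. It uses nothing about $K$ beyond $[L:K]=2$ and the fact that $L/K$ ramifies at an odd prime. It therefore shows that for any quadratic extension $L/K$ ramified at some odd prime, a unit of $K$ that becomes a square in $L$ is already a square in $K$.
\end{itemize}

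\textbf{Presentation.} In a final write-up, drop the ``sign argument'', the ``norm argument'' and the $\beta=\tau(\eta)/\eta$ remark; as you note yourself, they lead nowhere. Also, $K(\sqrt\varepsilon)/K$ should be called ``unramified outside $2$'' rather than ``ramification-free''.
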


\begin{proof}
Let $u \in U_K^+$ be an arbitrary totally positive unit. Since \(K\subset L\) and both fields are totally real, we have
$
U_K^+\subseteq U_L^+.
$
Thus \(u\in U_L^+\).
By hypothesis, every totally positive unit of $L$ is a square in $L$, so there exists a unit $v \in U_L$ such that $v^2 = u$.  We claim that $v$ must belong to $K$.

Suppose $v \notin K$. Since $\Gal(L/\mathbb{Q}) \cong \mathbb{Z}/2\mathbb{Z} \times \mathbb{Z}/q\mathbb{Z}$ with $q$ being an odd prime, the relative extension $L/K$ is quadratic. Thus, $v \notin K$ implies that $L = K(v) = K(\sqrt{u})$.

On the other hand, since $p = 4q+1 \equiv 1 \pmod 4$, the unique quadratic subfield of $\mathbb{Q}(\zeta_p)$ is $\mathbb{Q}(\sqrt{p})$. Since $[K:\mathbb{Q}] = q$ is odd, $\mathbb{Q}(\sqrt{p}) \cap K = \mathbb{Q}$. Furthermore, the compositum $K(\sqrt{p}) \subseteq L$ has degree $[K(\sqrt{p}):\mathbb{Q}] = [K:\mathbb{Q}] [\mathbb{Q}(\sqrt{p}):\mathbb{Q}] = 2q$. Comparing degrees over $\mathbb{Q}$ with $[L:\mathbb{Q}] = (p-1)/2 = 2q$, we obtain $L = K(\sqrt{p})$.

Since two quadratic extensions $K(\sqrt{u})$ and $K(\sqrt{p})$ coincide if and only if $u/p$ is a square in $K^\times$. Therefore, there exists some element $w \in K^\times$ such that
$
u = p \cdot w^2.
$
Taking the norm $N_{K/\mathbb{Q}}$ on both sides:
$
N_{K/\mathbb{Q}}(u) = N_{K/\mathbb{Q}}(p \cdot w^2) = p^q \cdot \left(N_{K/\mathbb{Q}}(w)\right)^2.
$
Since $u \in U_K^+$, $N_{K/\mathbb{Q}}(u) = 1$. Substituting into the norm equation, we get
$
1 = p^q \cdot \left(N_{K/\mathbb{Q}}(w)\right)^2.
$
Let $N_{K/\mathbb{Q}}(w) = \frac{a}{b} \in \mathbb{Q}^\times$ with $\gcd(a,b)=1$ for $a, b \in \mathbb{Z} \setminus \{0\}$. Clearing denominators gives
\begin{equation}\label{b2}
b^2 = p^q \cdot a^2.
\end{equation}
We use the $p$-adic valuation $v_p$ \cite[Chapter II 6.1]{Shimura2010} on both sides of Equation \eqref{b2}:
\begin{itemize}
    \item For the left-hand side: $v_p(b^2) = 2 v_p(b)$, which is an even integer.
    \item For the right-hand side: $v_p(p^q a^2) = q + 2 v_p(a)$, which is an odd integer.
\end{itemize}
This asserts that an even integer equals an odd integer, which is a contradiction.

Hence, our assumption that $v \notin K$ is false, forcing $v \in K$. Since $v \in U_K$ and $v^2 = u$, we conclude that $u \in U_K^2$, which completes the proof that $U_K^+ = U_K^2$.
\end{proof}

As defined in Theorem \ref{squares}, $K$ be the unique subfield of $L$ with degree $[K:\mathbb{Q}] = q$. If $U_L^+=U_L^2$, we can also express any totally positive unit $\varepsilon \in U_K^+$ as $\varepsilon = \eta^2$ for some unit $\eta \in U_K$. Consequently,
$
\Tr_{K/\mathbb{Q}}(\varepsilon) = \Tr_{K/\mathbb{Q}}(\eta^2).
$

\subsection{The trace quadratic form on $\mathbb{Q}(\zeta_p)^+$}
Let \(p = 4q + 1\) be an odd prime, where $q \ge 3$ is a prime integer. Define
\begin{equation*}
e_j := \zeta_p^j + \zeta_p^{-j}, \quad 1 \le j \le 2q.
\end{equation*}
By \cite[Corollary 2.2.1]{Davis1969},
$
\mathcal{O}_L = \bigoplus_{j=1}^{2q} \mathbb{Z} e_j.
$
Thus \(\{ e_1, e_2, \dots, e_{2q} \}\) is a \(\mathbb{Z}\)-basis of \(\mathcal{O}_L\). Note that $\sum_{j=1}^{2q} e_j = -1$.

\begin{lemma}\label{trace_matrix_lemma}
For any $1 \le j, k \le 2q$, the trace pairing $\Tr_{L/\mathbb{Q}}(e_j e_k)$ satisfies:
\begin{equation*}
\Tr_{L/\mathbb{Q}}(e_j e_k) = 
\begin{cases} 
p - 2, & \text{if } j = k, \\
-2, & \text{if } j \neq k.
\end{cases}
\end{equation*}
\end{lemma}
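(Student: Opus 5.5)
The plan is to expand the product $e_je_k$ directly and use the standard evaluation of traces of the elements $e_m$. The product formula is
\begin{equation*}
e_j e_k = (\zeta_p^j+\zeta_p^{-j})(\zeta_p^k+\zeta_p^{-k}) = e_{j+k} + e_{j-k},
\end{equation*}
where we interpret $e_m := \zeta_p^m+\zeta_p^{-m}$ for any integer $m$. Note that $e_0 = 2$ and $e_m = e_{-m} = e_{m \bmod p}$. Since the trace is additive, it suffices to know $\Tr_{L/\mathbb{Q}}(e_m)$ for all integers $m$.

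The key step is the trace of a single $e_m$. Suppose first that $p \nmid m$. Then $\zeta_p^m$ is a primitive $p$-th root of unity, and $e_m$ is a Galois conjugate of $e_1$. The elements $\sigma(e_1)$, for $\sigma \in \Gal(L/\mathbb{Q}) \cong (\mathbb{Z}/p\mathbb{Z})^\times/\{\pm1\}$, are exactly $e_1,\dots,e_{2q}$. Hence
\begin{equation*}
\Tr_{L/\mathbb{Q}}(e_m) = \sum_{j=1}^{2q} e_j = \sum_{a=1}^{p-1}\zeta_p^a = -1,
\end{equation*}
which is the identity $\sum_j e_j=-1$ already noted. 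Suppose instead that $p \mid m$. Then $e_m = 2$, and $\Tr_{L/\mathbb{Q}}(2) = 2\cdot[L:\mathbb{Q}] = 4q = p-1$.

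The two cases of the lemma then follow. Take $1 \le j,k \le 2q$, so that $|j-k| < 2q < p$ and $2 \le j+k \le 4q < p$.
\begin{itemize}
\item If $j=k$, then $e_j^2 = e_{2j} + 2$. Here $p\nmid 2j$, so $\Tr_{L/\mathbb{Q}}(e_j^2) = -1 + (p-1) = p-2$.
\item If $j\ne k$, then $p \nmid j+k$ and $p \nmid j-k$. The trace is therefore $-1 + (-1) = -2$.
\end{itemize}

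There is no real obstacle here. The only point needing care is the range check that $p \nmid j \pm k$ when $j \neq k$, which uses $j+k \le 4q = p-1$. The other point is the fact that the conjugates of $e_1$ run over all the $e_j$, which follows from the description of the Galois group given above.
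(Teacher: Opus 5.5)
Your proposal is correct and follows the paper's proof essentially verbatim. Both expand $e_je_k=e_{j+k}+e_{j-k}$, use $\Tr_{L/\mathbb{Q}}(e_m)=-1$ for $p\nmid m$ and $\Tr_{L/\mathbb{Q}}(2)=4q$, and check that $0<|j\pm k|<p$ when $j\neq k$. The paper simply asserts $\Tr_{L/\mathbb{Q}}(e_m)=\sum_{j} e_j$, whereas you justify it by noting that the conjugates of $e_1$ run over $e_1,\dots,e_{2q}$.
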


\begin{proof}
Using the  identity $e_j e_k = e_{j+k} + e_{j-k}$, we evaluate the trace.
 When $j = k$, $e_j^2 = e_{2j} + 2$. Since $1 \le 2j < p$,  $\Tr_{L/\mathbb{Q}}(e_{2j})= \sum_{j=1}^{2q} e_j = -1$. Hence $\Tr_{L/\mathbb{Q}}(e_j^2) = -1 + 2 [L:\mathbb{Q}] = -1 + 4q = p - 2$.
 When $j \neq k$,  $e_j e_k = e_{j+k} + e_{j-k}$. Since $1 \le |j \pm k| < p$ and $j \neq k$,  $\Tr_{L/\mathbb{Q}}(e_j e_k) = (-1) + (-1) = -2$.
\end{proof}

\begin{proposition}\label{trace_quadratic_prop}
Let $x = \sum_{j=1}^{2q} c_j e_j \in \mathcal{O}_L$ with $c_j \in \Z$. Then
\begin{equation}\label{trace_L_formula}
\Tr_{L/\mathbb{Q}}(x^2) = p \sum_{j=1}^{2q} c_j^2 - 2 \left( \sum_{j=1}^{2q} c_j \right)^2.
\end{equation}
\end{proposition}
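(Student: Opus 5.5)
The plan is to expand $x^2$ bilinearly and feed in the Gram matrix from Lemma \ref{trace_matrix_lemma}. Since $\Tr_{L/\mathbb{Q}}$ is $\mathbb{Q}$-linear and the $c_j$ are integers, writing $x=\sum_j c_j e_j$ gives
\begin{equation*}
\Tr_{L/\mathbb{Q}}(x^2)=\sum_{j=1}^{2q}\sum_{k=1}^{2q} c_j c_k \Tr_{L/\mathbb{Q}}(e_j e_k).
\end{equation*}
Thus the trace form in the basis $\{e_j\}$ has Gram matrix $M=pI-2J$, where $I$ is the $2q\times 2q$ identity and $J$ is the all-ones matrix. By Lemma \ref{trace_matrix_lemma}, the diagonal entries are $p-2$ and the off-diagonal entries are $-2$, which is exactly this decomposition.

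Next, with $c=(c_1,\dots,c_{2q})^T$, I evaluate the quadratic form $c^T M c = p\,c^Tc - 2\,c^TJc$. Here $c^Tc=\sum_j c_j^2$. Also $c^TJc=\left(\sum_j c_j\right)^2$, because $J=\mathbf{1}\mathbf{1}^T$. Substituting these gives formula \eqref{trace_L_formula}.

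Equivalently, one can split the double sum into its diagonal part and its off-diagonal part. The diagonal part is $\sum_j (p-2)c_j^2$. The off-diagonal part is $-2\sum_{j\ne k}c_jc_k=-2\left[\left(\sum_j c_j\right)^2-\sum_j c_j^2\right]$. Adding these, the $c_j^2$ terms combine to $p\sum_j c_j^2$, and the formula follows.

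There is no real obstacle once Lemma \ref{trace_matrix_lemma} is in hand. The only point to check is the linearity of the trace over $\mathbb{Q}$, which justifies pulling the integer coefficients out of the trace. The substantive input, namely the value of $\Tr(e_je_k)$, was already established in the lemma.
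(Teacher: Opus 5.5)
Your proposal is correct and follows essentially the same route as the paper: expand $\Tr_{L/\mathbb{Q}}(x^2)$ bilinearly, insert the values from Lemma \ref{trace_matrix_lemma}, and combine the diagonal sum $(p-2)\sum_j c_j^2$ with the off-diagonal sum $-2\sum_{j\neq k}c_jc_k$. Your Gram-matrix formulation $M=pI-2J$ is a compact restatement of that same computation.
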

\begin{proof}
Expanding $\Tr_{L/\mathbb{Q}}(x^2)$ and using Lemma~\ref{trace_matrix_lemma}:
\begin{align*}
\Tr_{L/\mathbb{Q}}(x^2) &= \sum_{j=1}^{2q} c_j^2 \Tr_{L/\mathbb{Q}}(e_j^2) + \sum_{j \neq k} c_j c_k \Tr_{L/\mathbb{Q}}(e_j e_k) 
= (p-2) \sum_{j=1}^{2q} c_j^2 - 2 \sum_{j \neq k} c_j c_k \\
&= p \sum_{j=1}^{2q} c_j^2 - 2 \left( \sum_{j=1}^{2q} c_j^2 + \sum_{j \neq k} c_j c_k \right) 
= p \sum_{j=1}^{2q} c_j^2 - 2 \left( \sum_{j=1}^{2q} c_j \right)^2.
\end{align*}
\end{proof}

\subsection{Proof of the Main Theorem}
Since $L = \mathbb{Q}(\zeta_p)^+$, $L/\mathbb{Q}$ is a Galois extension of degree $[L:\mathbb{Q}] = 2q$ with cyclic Galois group $G = \operatorname{Gal}(L/\mathbb{Q}) \cong \mathbb{Z}/2q\mathbb{Z}$. By the Fundamental Theorem of Galois Theory, any intermediate subfield $K \subseteq L$ of degree $[K:\mathbb{Q}] = n$ corresponds to a unique subgroup $H = \operatorname{Gal}(L/K) \le G$ of order $|H| = m = \frac{2q}{n}$. The subfield $K$ is precisely the fixed field of $H$, i.e.,
\begin{equation*}
K = L^H = \{ \alpha \in L \mid \sigma(\alpha) = \alpha \text{ for all } \sigma \in H \}.
\end{equation*}

Let $\{e_1, e_2, \dots, e_{2q}\}$ be a $\mathbb{Z}$-basis of the ring of integers $\mathcal{O}_L$ upon which the Galois group $G$ acts as a group of permutations on the index set $I = \{1, 2, \dots, 2q\}$. That is, for each $\sigma \in G$, there exists a unique permutation $\pi_\sigma \in S_{2q}$ such that:
\begin{equation*}
\sigma(e_j) = e_{\pi_\sigma(j)} \quad \text{for all } j \in \{1,2, \dots, 2q\}.
\end{equation*}

\begin{lemma}\label{fix}
The action of $G$ on the index set $I$ is fixed-point-free for non-identity elements. That is, if $\sigma \in G$ is non-trivial, then $\pi_\sigma(j) \neq j$ for all $j \in I$.
\end{lemma}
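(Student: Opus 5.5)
We make the action explicit and use the fact that the basis elements $e_j$ are distinct. Every $\sigma \in G$ comes from an automorphism $\sigma_a \colon \zeta_p \mapsto \zeta_p^a$ of $\mathbb{Q}(\zeta_p)$ with $a \in (\Z/p\Z)^\times$. The class of $a$ modulo $\{\pm 1\}$ is determined by $\sigma$. We have
\[
\sigma_a(e_j) = \zeta_p^{aj} + \zeta_p^{-aj} = e_{\pi_\sigma(j)},
\]
where $\pi_\sigma(j)$ is the unique representative of $\pm aj \bmod p$ lying in $\{1,\dots,2q\}$. Here we use that $\{1,\dots,2q\}$ is a set of representatives of $(\Z/p\Z)^\times/\{\pm1\}$, since $2q = (p-1)/2$. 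So the permutation action of $G$ on $I$ is the natural action of $(\Z/p\Z)^\times/\{\pm 1\}$ on itself by multiplication. This also shows that the $e_j$ are pairwise distinct: for $1 \le j < k \le 2q$, the exponents $\pm j$ and $\pm k$ are different residues modulo $p$. Hence $e_j \ne e_k$, because $\{\zeta_p, \dots, \zeta_p^{p-1}\}$ is linearly independent over $\Q$, being a basis of $\mathbb{Q}(\zeta_p)$. In particular $\pi_\sigma$ is well defined by $\sigma(e_j) = e_{\pi_\sigma(j)}$.

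Next suppose $\pi_\sigma(j) = j$ for some $j \in I$. Then $aj \equiv \pm j \pmod p$. Since $p \nmid j$, we can cancel $j$ in the field $\Z/p\Z$ and obtain $a \equiv \pm 1 \pmod p$. Thus $\sigma_a$ restricts to the identity on $L$, since complex conjugation fixes $L$ pointwise. So $\sigma$ is trivial, which proves the contrapositive of the claim.

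An alternative phrasing avoids computing $\pi_\sigma$ directly. If $\sigma(e_j) = e_j$, then $\sigma$ fixes $e_j$. It therefore fixes $\mathbb{Q}(e_j) = \mathbb{Q}(\zeta_p^j + \zeta_p^{-j})$, which equals $L$ because $\zeta_p^j$ is again a primitive $p$-th root of unity. Hence $\sigma = \mathrm{id}$.

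There is no real obstacle in this argument. The only point needing care is checking that the permutation $\pi_\sigma$ is well defined, which is the distinctness of the $e_j$ shown in the first paragraph. After that, the cancellation step uses only that $p$ is prime.
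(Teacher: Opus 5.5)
Your proof is correct and follows essentially the same route as the paper: both reduce $\sigma_a(e_j)=e_j$ to $aj\equiv \pm j \pmod p$ (the paper via $\cos(2\pi aj/p)=\cos(2\pi j/p)$, you via the residue description of $\pi_\sigma$) and then cancel $j$ to get $a\equiv \pm 1$. Your extra check that the $e_j$ are pairwise distinct, so that $\pi_\sigma$ is well defined, covers a point the paper takes for granted, and your alternative argument via $\mathbb{Q}(e_j)=L$ matches the paper's closing remark that $e_j$ is a primitive element for $L/\mathbb{Q}$.
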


\begin{proof}
 For any $\sigma_a \in G \cong (\mathbb{Z}/p\mathbb{Z})^\times / \{\pm 1\}$ and notice that $e_j=\zeta_p^j + \zeta_p^{-j} = 2\cos\frac{2\pi j}{p}$, the condition $\sigma_a(e_j) = e_j$ is equivalent to $\cos(\frac{2\pi aj}{p}) = \cos(\frac{2\pi j}{p})$, which yields $aj \equiv \pm j \pmod p$. Since $\gcd(j, p) = 1$, this forces $a \equiv \pm 1 \pmod p$, meaning $\sigma_a = \mathrm{id}$. Thus, $\mathrm{Stab}_G(e_j) = \{\mathrm{id}\}$, so $e_j$ has $2q$ distinct conjugates under $G$ and is a primitive element for $L/\mathbb{Q}$.
\end{proof}

\begin{proposition}
Let $H \le G$ be a subgroup of order $|H| = m$. Under the restricted action of $H$ on $I$, the index set $I$ decomposes into a disjoint union of $n = \frac{2q}{m}$ equicardinal orbits:
\begin{equation*}
I = \mathcal{O}_1 \sqcup \mathcal{O}_2 \sqcup \dots \sqcup \mathcal{O}_n,
\end{equation*}
where each orbit $\mathcal{O}_i$ has cardinality $|\mathcal{O}_i| = |H| = m$. Furthermore, $j, k \in I$ belong to the same orbit $\mathcal{O}_i$ if and only if there exists some $\sigma \in H$ such that $k = \pi_\sigma(j)$.
\end{proposition}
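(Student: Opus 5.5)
This is the orbit–stabilizer theorem combined with Lemma \ref{fix}. First I would check that $\sigma \mapsto \pi_\sigma$ is a group action of $G$ on $I$. Since each $\sigma$ is a field automorphism and $\sigma(e_j)=e_{\pi_\sigma(j)}$ with the $e_j$ pairwise distinct (Lemma \ref{fix} shows they are distinct real numbers $2\cos\frac{2\pi j}{p}$, $1\le j\le 2q$), we get $\pi_{\sigma\tau}=\pi_\sigma\circ\pi_\tau$ and $\pi_{\mathrm{id}}=\mathrm{id}$. Concretely, $\sigma_a(e_j)=e_{aj}$, where the index $aj$ is reduced to the representative of $\pm aj \bmod p$ lying in $\{1,\dots,2q\}$. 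Restricting to $H$ gives an action of $H$ on $I$.

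Next, the orbits of any group action partition the set, and the orbit relation is exactly "$k=\pi_\sigma(j)$ for some $\sigma\in H$". This relation is an equivalence relation: reflexivity comes from the identity, symmetry from inverses, and transitivity from composition. This gives the disjoint union and the final "furthermore" clause directly.

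For the cardinalities, take $j\in I$ and consider its stabilizer in $H$. By Lemma \ref{fix}, no non-identity $\sigma\in G$ fixes any index, so $\mathrm{Stab}_H(j)=\{\mathrm{id}\}$. Orbit–stabilizer then gives $|\mathcal{O}(j)|=|H|/|\mathrm{Stab}_H(j)|=m$. Since all orbits have size $m$ and together partition $I$ with $|I|=2q$, there are exactly $2q/m=n$ of them.

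The only point needing care is well-definedness of $\pi_\sigma$ as a permutation, i.e.\ that $\sigma$ maps the set $\{e_j\}$ bijectively onto itself. This holds because $\sigma_a$ sends $e_j$ to $e_{j'}$ with $j'\equiv\pm aj \pmod p$, and multiplication by $a$ permutes $(\mathbb{Z}/p\mathbb{Z})^\times/\{\pm1\}$. I do not expect any real obstacle.
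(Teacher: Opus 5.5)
Your proposal is correct and is essentially the paper's argument: Lemma~\ref{fix} gives trivial stabilizers in $H$, the orbit--stabilizer theorem then gives orbits of size $m$, and since the orbits partition $I$ there are $2q/m=n$ of them. The paper takes for granted that $\sigma\mapsto\pi_\sigma$ is a well-defined action by permutations and that the orbit relation is the stated one; you verify both, and you do so correctly.
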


\begin{proof}
By Lemma \ref{fix}, the stabilizer subgroup $\mathrm{Stab}_H(j)$ is trivial for every $j \in I$. By the Orbit-Stabilizer Theorem, each orbit has cardinality $|\mathcal{O}_j| = |H| / |\mathrm{Stab}_H(j)| = m$. Since the orbits partition $I$, the number of orbits is $n = |I| / m = 2q / m$.
\end{proof}

Since 
$
\mathcal{O}_L = \bigoplus_{j=1}^{2q} \mathbb{Z} e_j
$ \cite[Corollary 2.2.1]{Davis1969}, for any   $\eta \in \mathcal{O}_K$,  $\eta$ can be uniquely expressed as $ \eta=\sum_{j=1}^{2q} c_j e_j$ with $c_j \in \mathbb{Z}$. Then we  have 
\begin{equation*}
\eta = \sigma(\eta) = \sum_{j=1}^{2q} c_j \sigma(e_j) = \sum_{j=1}^{2q} c_j e_{\pi_\sigma(j)} = \sum_{k=1}^{2q} c_{\pi_\sigma^{-1}(k)} e_k,\quad \forall \sigma\in H.
\end{equation*}
Hence 
$
c_k = c_{\pi_\sigma^{-1}(k)}\ \text{for all } \sigma \in H \text{ and all } k \in \{1, \dots, 2q\}
$, that is,  $c_j = c_k$ whenever $j$ and $k$ belong to the same $H$-orbit $\mathcal{O}_i$. Therefore, the coefficient $c_j$ is constant on each orbit $\mathcal{O}_i$.

Setting $ d_i:=c_j$ for all $j \in \mathcal{O}_i$, where $d_1, \dots, d_n \in \mathbb{Z}$. Then we have
\begin{equation*}
\eta = \sum_{i=1}^n \sum_{j \in \mathcal{O}_i} c_j e_j = \sum_{i=1}^n d_i \left( \sum_{j \in \mathcal{O}_i} e_j \right) = \sum_{i=1}^n d_i E_i,
\end{equation*}
where $E_i = \sum_{j \in \mathcal{O}_i} e_j$ (for $i=1,\dots,n$) are the Gauss periods  for the subfield $K/\mathbb{Q}$, each consisting of a sum of $m$ basis elements.

Define
$
T := \sum_{i=1}^n d_i$ and $\bar{d} := \frac{T}{n}.
$
Then we have $\sum_{j=1}^{2q} c_j^2 = m \sum_{i=1}^n d_i^2$ and $\sum_{j=1}^{2q} c_j = m T$.
Using the transitivity of trace $\Tr_{L/\mathbb{Q}}(\eta^2) = m \Tr_{K/\mathbb{Q}}(\eta^2)$ and Equation \eqref{trace_L_formula}, we know
\begin{equation*}
\Tr_{K/\mathbb{Q}}(\eta^2) = p \sum_{i=1}^n d_i^2 - 2m T^2.
\end{equation*}
Since $2m = \frac{4q}{n} = \frac{p-1}{n}$, by using $\sum_{i=1}^n d_i^2 = \sum_{i=1}^n (d_i - \bar{d})^2 + n \bar{d}^2 = \sum_{i=1}^n (d_i - \bar{d})^2 + \frac{T^2}{n}$, we have

\begin{proposition}\label{subfield_trace_formula}
For any element $\eta = \sum_{i=1}^n d_i E_i \in \mathcal{O}_K$, where $E_i$ are the orbit sums of $e_j$,
\begin{equation}\label{master_trace_eq}
\Tr_{K/\mathbb{Q}}(\eta^2) = p \sum_{i=1}^n (d_i - \bar{d})^2 + \frac{T^2}{n}.
\end{equation}
\end{proposition}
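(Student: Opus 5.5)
The plan is a direct algebraic substitution into the formula obtained just before the statement. I will start from
\[
\Tr_{K/\mathbb{Q}}(\eta^2) = p \sum_{i=1}^n d_i^2 - 2m T^2,
\]
which follows from Proposition \ref{trace_quadratic_prop} and transitivity of the trace. The ingredients are: $c_j$ is constant on each $H$-orbit; each orbit has size $m$; hence $\sum_j c_j^2 = m\sum_i d_i^2$ and $\sum_j c_j = mT$; and $\Tr_{L/\mathbb{Q}}(\eta^2) = m\,\Tr_{K/\mathbb{Q}}(\eta^2)$ because $\eta^2\in K$ and $[L:K]=m$. Dividing the identity of Proposition \ref{trace_quadratic_prop} by $m$ gives the displayed formula, since $p\,m\sum d_i^2 - 2m^2T^2$ divided by $m$ is $p\sum d_i^2 - 2mT^2$.

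Next I rewrite the two terms.
\begin{itemize}
\item By the variance decomposition $\sum_i d_i^2 = \sum_i (d_i-\bar d)^2 + T^2/n$, the first term becomes $p\sum_i (d_i-\bar d)^2 + pT^2/n$.
\item Since $mn=2q$, we have $2m = 4q/n = (p-1)/n$. Hence $2mT^2 = (p-1)T^2/n$.
\end{itemize}
Subtracting, the $T^2$ contributions combine to $(p-(p-1))T^2/n = T^2/n$. This yields exactly \eqref{master_trace_eq}.

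No step should be a real obstacle; it is bookkeeping. The only point needing care is justifying the transitivity $\Tr_{L/\mathbb{Q}}(\eta^2)=m\Tr_{K/\mathbb{Q}}(\eta^2)$. This holds because each embedding of $K$ extends to exactly $m$ embeddings of $L$, all agreeing on $\eta^2$. Also, the relation $2m=(p-1)/n$ uses $p=4q+1$ and $[L:\mathbb{Q}]=2q$.
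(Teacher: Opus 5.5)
Your proposal is correct and follows the paper's own derivation essentially step for step. Like the paper, you use orbit-constancy of the coefficients and transitivity of the trace to reduce to $\Tr_{K/\mathbb{Q}}(\eta^2)=p\sum_i d_i^2-2mT^2$, then combine $2m=(p-1)/n$ with the variance decomposition $\sum_i d_i^2=\sum_i (d_i-\bar d)^2+T^2/n$.
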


We now complete the proof of Theorem~\ref{maintheorem1} by analyzing the three subfield cases.

\begin{proof}[Proof of Theorem \ref{maintheorem1}]
Let $\varepsilon = \eta^2 \in U_K^+$ with $\varepsilon \neq 1$, which implies $\eta \neq \pm 1$. Thus, the integer vector $(d_1, \dots, d_n) \in \Z^n$ cannot have all equal components, so the discrete variance $V = \sum_{i=1}^n (d_i - \bar{d})^2 > 0$. The minimum non-zero value of $V$ over $\Z^n$ is $\frac{n-1}{n}$, attained when $n-1$ components equal $a$ and one component equals $a+1$.

\medskip
\noindent\textbf{Case 1: $n = q$ (The $q$-th degree subfield $K_q$, $m=2$).} \\
In this case, $2m = 4$. Equation \eqref{master_trace_eq} becomes:
$\Tr_{K_q/\mathbb{Q}}(\eta^2) = p \sum_{i=1}^q (d_i - \bar{d})^2 + \frac{T^2}{q}.$
\begin{itemize}
     \item If $T \neq 0$, since $T \in \Z$, $T^2 \ge 1$. Combined with $V \ge \frac{q-1}{q}$, we get
    $$\Tr_{K_q/\mathbb{Q}}(\eta^2) \ge p \left( \frac{q-1}{q} \right) + \frac{1}{q} = \frac{p(q-1)+1}{q} = p - \frac{p-1}{q} = p - 4.$$
    
     \item If $T = 0$,  since not all $d_i$ are equal and $\sum d_i = 0$, there must be at least one positive and one negative integer. Thus $\sum d_i^2 \ge 1^2 + (-1)^2 = 2$. Then
     $
    \Tr_{K_q/\mathbb{Q}}(\eta^2) = p \sum_{i=1}^q d_i^2 \ge 2p > p - 4.
    $
\end{itemize}    
\medskip
\noindent\textbf{Case 2: $n = 2q$ (The full field $L$, $m=1$).} 
\begin{itemize}
    \item  If $T \neq 0$,  $\Tr_{L/\mathbb{Q}}(\eta^2) \ge p \left(\frac{2q-1}{2q}\right) + \frac{1}{2q} = p - \frac{p-1}{2q} = p - 2 > p - 4$.
    \item If $T = 0$,  $\Tr_{L/\mathbb{Q}}(\eta^2) \ge 2p > p - 4$.
\end{itemize}
\medskip
\noindent\textbf{Case 3: $n = 2$ (The quadratic subfield $K = \mathbb{Q}(\sqrt{p})$, $m=q$).} \\
Let $\varepsilon \in U_K^+$ be a non-trivial totally positive unit. For a prime $p \equiv 1 \pmod 4$, the fundamental unit of $K = \mathbb{Q}(\sqrt{p})$ has norm $-1$ \cite[Theorem 11.5.4]{AlacaWilliams2003}. Thus, any totally positive unit must be a square, so $\varepsilon = \eta^2$ for some unit $\eta \in \mathcal{O}_K^\times$.
Since $\mathcal{O}_K = \mathbb{Z}\left[\frac{1+\sqrt{p}}{2}\right]$, we can write $\eta = \frac{t + k\sqrt{p}}{2}$ for integers $t, k$ with $t \equiv k \pmod 2$. If $k = 0$, then $\eta \in \mathbb{Z}$, forcing $\eta = \pm 1$ and $\varepsilon = 1$, which contradicts that $\varepsilon$ is non-trivial. Thus, $k \neq 0$.

The unit condition $N_{K/\mathbb{Q}}(\eta) = \pm 1$ yields the Pell equation $t^2 - p k^2 = \pm 4$. Computing the trace of $\varepsilon = \eta^2$ gives:
$
\Tr_{K/\mathbb{Q}}(\varepsilon) = \Tr_{K/\mathbb{Q}}\left(\frac{t^2 + p k^2 + 2t k\sqrt{p}}{4}\right) = \frac{t^2 + p k^2}{2}.
$
Substituting $t^2 = p k^2 \pm 4$, we obtain $\Tr_{K/\mathbb{Q}}(\varepsilon) = p k^2 \pm 2$. Since $k^2 \ge 1$, it  follows that:
$
\Tr_{K/\mathbb{Q}}(\varepsilon) \ge p - 2 > p - 4.
$

Combining all three cases proves $\Tr_{K/\mathbb{Q}}(\varepsilon) \ge p - 4$ for all non-trivial subfields $K \neq \mathbb{Q}$. The fact that this lower bound is sharp and is attained in the subfield $K_q$  will be proved in Theorem \ref{p-4}. This completes the proof of the theorem.
\end{proof}

Next, we give a proof of Corollary \ref{1-2}.
\begin{proof}[Proof of Corollary \ref{1-2}]
If \( 2 \) is a primitive root modulo \( q \), then 
$
[U_L^+ : U_L^2] = 1
$
by \cite[Theorem 4.4]{KimLim2007}.
If  $p=4q+1<2^{22}$ and $p\neq 29$, then   \cite[Proposition 1]{Dummit2020} states that if the relative class number of $\mathbb{Q}(\zeta_p)$ is odd then $[U_L^+ : U_L^2] = 1$, which is guaranteed by  \cite[Proposition 3.1]{FujimaIchimura2018}.  Thus, the conclusion of Corollary \ref{1-2} follows from that of Theorem \ref{maintheorem1}.
\end{proof}

\begin{remark}\label{remarkp=4q+1}
Let $p=4q+1$ be a prime, where $q$ is an odd prime. It is expected that there exist infinitely many primes of this form, but this has not yet been proven \cite{BatemanHorn1962}. 
For $p<2^{22}$, there are $7733$ primes of the form $4q+1$, where $q$ is an odd prime.

Assume $p<1000$. Then 
\seqsplit{
13,\ 53,\ 149,\ 269,\ 557,\ 653}  are primes such that $2$ is a primitive root
mod $q$;   
\seqsplit{
29,\ 173,\ 293,\ 317,\ 389,\ 509,\ 773,\ 797} are primes for which $2$ is not a primitive root mod $q$. When $p=29$, $U_{\mathbb{Q}(\zeta_{29})^+}^+\neq U_{\mathbb{Q}(\zeta_{29})^+}^2$ as shown in \cite{Dummit2020}   , and it is informed by A. Schopieray that there exist totally positive algebraic units in the subfield of $\mathbb{Q}(\zeta_{29})^+$, which does not satisfy the lower bound given in Theorem \ref{maintheorem1}, their minimal polynomials are 
\begin{align*}
 T^7&-16T^6+89T^5-212T^4+236T^3-119T^2+23T-1,\\
 T^7&-23T^6+119T^5-236T^4+212T^3-89T^2+16T-1,\\
 T^7&-20T^6+130T^5-318T^4+341T^3-162T^2+30T-1,\\
 T^7&-20T^6+130T^5-376T^4+515T^3-307T^2+59T-1,
\end{align*}
we do not know whether these are all such minimal polynomials.
\end{remark}
We conclude  this subsection by presenting the following conjecture.

\begin{conjecture}
Let $p = 4q + 1$ be a prime integer, where $q\neq 7$ is an odd prime integer. Let $L = \mathbb{Q}(\zeta_p)^+$, and let $K \neq \mathbb{Q}$ be any totally real subfield of $L$. For any non-trivial totally positive unit $1 \neq \varepsilon \in U_K^+$, we have
$
\Tr_{K/\mathbb{Q}}(\varepsilon) \ge p - 4.
$
Furthermore, this lower bound is sharp and is reached in the subfield of degree $q$.
\end{conjecture}

\subsection{Explicit construction of sharp cyclotomic units}
In this subsection, we explicitly construct non-trivial totally positive units in $K_q$ whose traces  reach the lower bound $p-4$, which proves that bounds derived in the previous subsections are sharp.

Recall that $p = 4q + 1$, $G = \Gal(L/\mathbb{Q}) \cong (\mathbb{Z}/p\mathbb{Z})^\times / \{\pm 1\}$. Fix $a \in (\mathbb{Z}/p\mathbb{Z})^\times$ with $a^2 \equiv -1 \pmod p$. The subgroup corresponding to the  subfield $K_q \subset L$ is given by 
\begin{equation*}
H_q = \{1, -1, a, -a\} / \{\pm 1\} \cong \{1, a\}.
\end{equation*}
\begin{definition}\label{def}
We define the following cyclotomic units in $\mathcal{O}_{K_q}$:
\begin{equation*}
    \varepsilon_q:= (1+\zeta_p)(1+\zeta_p^{-1})(1+\zeta_p^a)(1+\zeta_p^{-a}) = (2+e_1)(2+e_a) \in \mathcal{O}_{K_q}.
\end{equation*}
\end{definition}

Let $T_q \subset \{1, 2, \dots, 2q\}$ be a set of coset representatives for $G / H_q$. The $q$ distinct Galois conjugates of $\varepsilon_q$ in $K_q$ are given by $(2+e_k)(2+e_{ak})$ for $k \in T_q$. Expanding the product, we obtain
\begin{equation}\label{S_1}
\Tr_{K_q/\mathbb{Q}}(\varepsilon_q) = \sum_{k \in T_q} (2+e_k)(2+e_{ak}) = 4q + 2 \sum_{k \in T_q} (e_k + e_{ak}) + \sum_{k \in T_q} e_k e_{ak}.
\end{equation}
Note that the index set $\{k, ak \pmod p \mid k \in T_q\}$ forms a complete set of representatives for $G$, which corresponds to $\{1, 2, \dots, 2q\}$. Thus, the sum reduces to $2 \sum_{j=1}^{2q} e_j = 2(-1) = -2$.

Let $S := \sum_{k \in T_q} e_k e_{ak}$. We next evaluate $S$ explicitly:

\begin{lemma}
$S =\sum_{k \in T_q} e_k e_{ak}= -1$.
\end{lemma}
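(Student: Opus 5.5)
We need $S=\sum_{k\in T_q} e_k e_{ak}=-1$. The plan is to expand each product with the identity $e_j e_k = e_{j+k}+e_{j-k}$ from Lemma \ref{trace_matrix_lemma}:
\[
e_k e_{ak} = e_{(1+a)k} + e_{(1-a)k}.
\]
Since $a^2\equiv -1 \pmod p$, we have $a\not\equiv \pm1$, so neither $(1+a)k$ nor $(1-a)k$ is $0$ modulo $p$. Hence each term is a genuine $e_j$ with $j\not\equiv 0$, and
\[
S=\sum_{k\in T_q}\bigl(e_{(1+a)k}+e_{(1-a)k}\bigr).
\]

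Next I would identify how the two index families sit in $G=(\mathbb{Z}/p\mathbb{Z})^\times/\{\pm1\}$. Note that $(1-a)=-a(1+a)\cdot a^{-2}\cdot(-1)$; more simply, $a(1+a)=a+a^2=a-1=-(1-a)$. So $(1-a)k \equiv \pm a\,(1+a)k$, and therefore $e_{(1-a)k}=e_{a(1+a)k}$. Writing $b=1+a$, we get
\[
S=\sum_{k\in T_q}\bigl(e_{bk}+e_{abk}\bigr)=\sum_{k\in T_q}\sum_{h\in H_q} e_{h\,bk}.
\]

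Now $T_q$ is a set of coset representatives of $G/H_q$. Multiplication by the fixed element $b$ permutes cosets, so $\{bk : k\in T_q\}$ is again a set of coset representatives. It follows that $\{h\,bk : h\in H_q,\ k\in T_q\}$ runs over all of $G$ exactly once, which gives
\[
S=\sum_{j=1}^{2q} e_j=-1.
\]

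The only delicate point, and the main obstacle, is checking that $b=1+a$ is invertible modulo $p$ and that the sign bookkeeping $e_{-j}=e_j$ is handled correctly. Invertibility holds because $1+a\equiv 0$ would force $a\equiv-1$, contradicting $a^2\equiv-1$ with $p\neq2$. The sign bookkeeping is automatic since $e_j$ depends only on $\pm j$.
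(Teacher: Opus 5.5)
Your proof is correct and is essentially the paper's argument: the key identity $a(1+a)\equiv -(1-a)$ (the paper's $\frac{1-a}{1+a}\equiv -a$) turns each product into an $H_q$-orbit sum over the translate $(1+a)T_q$, which covers $G$ exactly once. The only difference is cosmetic: you stay with the $e_j$ over $G$ via $e_je_k=e_{j+k}+e_{j-k}$ and $\sum_j e_j=-1$, whereas the paper expands into the four roots of unity and sums them over all of $(\mathbb{Z}/p\mathbb{Z})^\times$.
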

\begin{proof}
Expanding $e_k e_{ak}$ into roots of unity yields
\[
e_k e_{ak} = (\zeta_p^k + \zeta_p^{-k})(\zeta_p^{ak} + \zeta_p^{-ak}) = \zeta_p^{k(1+a)} + \zeta_p^{-k(1+a)} + \zeta_p^{k(1-a)} + \zeta_p^{-k(1-a)}.
\]
The exponent set for a fixed $k$ is $k \{ \pm(1+a), \pm(1-a) \}$. Since $a^2 \equiv -1 \pmod p$, we have the algebraic congruence
\begin{equation*}
\frac{1-a}{1+a} \equiv \frac{(1-a)^2}{1-a^2} \equiv \frac{-2a}{2} \equiv -a \pmod p.
\end{equation*}
This implies that the exponent set is equal to $k(1+a)\{\pm 1, \pm a\}$. As $k$ runs through $T_q$, the set of all $4q$ exponents $\{ k h (1+a) \pmod p \mid k \in T_q, h \in \{\pm 1, \pm a\} \}$ covers every non-zero residue class in $(\mathbb{Z}/p\mathbb{Z})^\times$ exactly once. Therefore
$
S = \sum_{x=1}^{p-1} \zeta_p^x = -1.
$
\end{proof}

Substituting $S = -1$ back into Equation \eqref{S_1} yields
$
\Tr_{K_q/\mathbb{Q}}(\varepsilon_q) = 4q - 2 + (-1) = p - 4.
$ Combining the preceding results, we obtain the following theorem.

\begin{theorem}\label{p-4}
The explicitly constructed totally positive unit $\varepsilon_q \in U_{K_q}^2$ satisfies $
 \Tr_{K_q/\mathbb{Q}}(\varepsilon_q) = p - 4$.
Hence,  when $U_L^+=U_L^2$, the lower bound for the trace of non-trivial totally positive units in $K_q$ is sharp.
\end{theorem}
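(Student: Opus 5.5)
The statement bundles four claims about $\varepsilon_q$: it is a unit, it is totally positive, it is a square in $K_q$ (i.e.\ lies in $U_{K_q}^2$), and its trace is $p-4$. The plan is to verify each directly. The trace was already computed just before the statement: Equation \eqref{S_1} together with the lemma $S=-1$ gives $4q-2-1=p-4$. The sharpness assertion then follows by combining this with Case 1 in the proof of Theorem \ref{maintheorem1}.

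\textbf{Membership in $K_q$.} The generator of $H_q$ is $\sigma_a\colon\zeta_p\mapsto\zeta_p^a$. Since $a^2\equiv-1$, it sends $e_1\mapsto e_a$ and $e_a\mapsto e_{a^2}=e_{-1}=e_1$. So $\sigma_a$ swaps the two factors $2+e_1$ and $2+e_a$, and $\varepsilon_q$ lies in the fixed field $K_q$.

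\textbf{Unit property.} Since $p$ is odd, $1+\zeta_p^j=(1-\zeta_p^{2j})/(1-\zeta_p^j)$. This is a quotient of two cyclotomic units of the form $(\zeta_p^{b}-1)/(\zeta_p-1)$, so it is a unit. Hence $\varepsilon_q$, a product of four such factors, is a unit.

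\textbf{Square in $K_q$.} Write $1+\zeta_p^j=\zeta_p^{j/2}(\zeta_p^{j/2}+\zeta_p^{-j/2})$, where $j/2$ means $j\cdot 2^{-1}\bmod p$. This gives
\[
(1+\zeta_p^j)(1+\zeta_p^{-j})=(\zeta_p^{j/2}+\zeta_p^{-j/2})^2=e_{j/2}^2 .
\]
Therefore $\varepsilon_q=(e_{c}e_{ac})^2$ with $c\equiv 2^{-1}\pmod p$.

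\begin{itemize}[leftmargin=*]
\item Each $e_{c}$, $e_{ac}$ is a unit in $L$, as a real cyclotomic unit up to a root of unity.
\item Their product $\eta:=e_ce_{ac}$ is fixed by $\sigma_a$: $e_c\mapsto e_{ac}$ and $e_{ac}\mapsto e_{a^2c}=e_{-c}=e_c$.
\end{itemize}

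So $\eta\in U_{K_q}$ and $\varepsilon_q=\eta^2\in U_{K_q}^2$. Total positivity is then automatic. It can also be seen directly, since every conjugate $(2+e_k)(2+e_{ak})$ has $|e_j|<2$.

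\textbf{Nontriviality.} We still need $\varepsilon_q\neq1$. This holds because its trace $p-4$ differs from $q=[K_q:\mathbb{Q}]$, as $p-4=4q-3\neq q$ for $q\ge 3$.

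\textbf{Sharpness.} When $U_L^+=U_L^2$, Case 1 of the proof of Theorem \ref{maintheorem1} gives $\Tr\ge p-4$ for every nontrivial $\varepsilon\in U_{K_q}^+$, and $\varepsilon_q$ attains this value. So the bound is sharp.

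\textbf{Main obstacle.} The only delicate step is the identification of $\varepsilon_q$ as the square of a $K_q$-unit. It needs the half-angle rewriting and the check that $\sigma_a$ fixes $e_ce_{ac}$; everything else is bookkeeping already done before the statement.
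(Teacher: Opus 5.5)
Your proposal is correct and follows the paper's proof: the paper also sets $m=(p+1)/2$ (your $c=2^{-1}\bmod p$), uses the half-angle identity to get $(1+\zeta_p)(1+\zeta_p^{-1})=e_m^2$ and hence $\varepsilon_q=(e_m e_{am})^2$, checks that $\sigma_a$ fixes $e_m e_{am}$ because $a^2\equiv -1$, and takes the trace value $p-4$ from the preceding computation. Your explicit checks that $\varepsilon_q$ is a unit and that $\varepsilon_q\neq 1$ (since $4q-3\neq q$) are details the paper leaves implicit.
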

\begin{proof}
Let $m = (p+1)/2$, so $2m \equiv 1 \pmod p$. The cyclotomic unit as 
$
1 + \zeta_p = 1 + \zeta_p^{2m} = \zeta_p^m (\zeta_p^{-m} + \zeta_p^m).
$
Let $\varepsilon=(1+\zeta_p)(1+\zeta_p^{-1})$.
Substituting this into $\varepsilon$, we obtain
\begin{equation*}
\varepsilon= (1+\zeta_p)(1+\zeta_p^{-1}) = \zeta_p^m \zeta_p^{-m} (\zeta_p^m + \zeta_p^{-m})^2 = (e_m)^2.
\end{equation*}
Since $e_m \in \mathcal{O}_L$, $\varepsilon$ is a square in $\mathcal{O}_L$.  Observe that $\varepsilon_q = \varepsilon \cdot \sigma_a(\varepsilon) = (e_m)^2 (e_{am})^2 = (e_m e_{am})^2$. We must show $\eta_q := e_m e_{am}\in K_q$. Notice that the Galois group $\Gal(L/K_q)$ is generated by $\sigma_a$, applying $\sigma_a$ to $\eta_q$ gives
$
\sigma_a(e_m e_{am}) = e_{am} e_{a^2 m}.
$
Since $a^2 \equiv -1 \pmod p$ and $e_{-x} = e_x$ for all $x$, we have $e_{a^2 m} = e_{-m} = e_m$. Thus,
$
\sigma_a(\eta_q) = e_{am} e_m = \eta_q.
$
This shows that $\eta_q$ is fixed by $\Gal(L/K_q)$,  $\eta_q \in \mathcal{O}_{K_q}$. Therefore, $\varepsilon_q = \eta_q^2$ is a square unit in $U_{K_q}^2$ and is totally positive. 
\end{proof}

\section{Spherical fusion category of prime global dimension}\label{sectionfusion}
In this section, we apply the trace inequality to the classification of spherical fusion categories of global dimensions being a prime.


Let $\C$ be a spherical fusion category of global dimension $p$, where $p$ is a prime, then $f_\chi=p\cdot u_{f_\chi}$ for any formal codegree $f_\chi$ of $\C$ \cite[Lemma 5.1]{Schopieray2021}, where $u_{f_\chi}\in\mathbb{Q}(\zeta_p)^+$ is an algebraic unit. In this particular case, then Equation (\ref{generalclasseq})  implies (see also \cite[Proposition 5.4]{Schopieray2021})
\begin{align}\label{classeqt}
\sum_{\chi}\chi(1)u_{f_\chi}^{-1}=p.
\end{align}
By using the classical Siegel's trace theorem \cite{Siegel1945},  \(\mathcal{C}\) is pointed or \(p = 5\) if  \(p = 2q + 1\) \cite[Theorem 5.5]{Schopieray2021}  , where \( q\geq 2\) is also a prime. For $p=5$, the structure of $\C$ is known by \cite[Example 5.1.2]{Ostrik2019}.

It was claimed that spherical fusion category of global dimension $13$ is also pointed in \cite[Example 5.7]{Schopieray2021}. However,  there does exist a case that is not excluded by the method of \cite{Schopieray2021}. Indeed, by Definition \ref{def} and Theorem \ref{p-4},  when $p=13$, $5^2 \equiv -1 \pmod{13}$, there exist $\varepsilon_3=(2+e_1)(2+e_5)  \in U_{K_3}^+$. The minimal polynomial of this element is $T^3-9T^2+14T-1$, therefore, $ \Tr_{K_3/\mathbb{Q}}(\varepsilon_3) = 9$.
\begin{lemma}\label{13ranklemma}
 Let $\C$ be a spherical fusion category of global dimension $13$. If $\C$ is not pointed, then $\mathrm{rank}(\C)=7$ and $\mathrm{FPdim}(\C)=13\alpha$, where $\alpha$ is the largest root of
$
T^3-14T^2+9T-1.
$  
\end{lemma}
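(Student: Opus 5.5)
We work with the class equation \eqref{classeqt}. For $p=13$ it reads $\sum_{\chi}\chi(1)u_{f_\chi}^{-1}=13$. Here each $u_{f_\chi}^{-1}=p/f_\chi$ is a totally positive unit in $L=\mathbb{Q}(\zeta_{13})^+$ (by \cite[Corollary 2.14]{Ostrik2015} and \cite[Lemma 5.1]{Schopieray2021}). The trivial term, coming from $f=\dim(\C)=13$, contributes $u=1$.

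First, for $p=13$ we have $q=3$ and $2$ is a primitive root mod $3$, so Corollary \ref{1-2} applies. Thus every nontrivial totally positive unit $\varepsilon$ in a subfield $K\neq\Q$ satisfies $\Tr_{K/\Q}(\varepsilon)\ge 9$, and by Case 2 and Case 3 of the proof of Theorem \ref{maintheorem1} the bound improves to $\ge 11$ unless $K=K_3$.

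Second, we take the trace of \eqref{classeqt} over $\Q$. Since $\Gal(\overline{\Q}/\Q)$ permutes the irreducible representations $\chi$ of $\mathrm{Gr}(\C)\otimes\mathbb{C}$, preserving $\chi(1)$ and sending $f_\chi$ to $f_{\sigma\chi}$, we can group the terms into Galois orbits. A Galois orbit of formal codegrees whose units $u$ generate a field $K$ contributes $\chi(1)\Tr_{K/\Q}(u^{-1})$ to the sum. Following \cite[Theorem 5.5]{Schopieray2021}, we then use the Frobenius–Perron codegree. The orbit of $\mathrm{FPdim}(\C)=13u_0$ is nontrivial because $\C$ is not pointed, and it contributes $1\cdot\Tr(u_0^{-1})$. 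Meanwhile, the rational orbits with $u=1$ each contribute $\chi(1)$, and the trivial one contributes exactly $1$. Since $13-1=12$, at most one nontrivial orbit can occur, and it must have $\chi(1)=1$ and trace $\le 12$. This orbit must be the FPdim orbit.

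So the remaining terms are $u=1$ and number $12-\Tr(u_0^{-1})$. Because $f=13$ forces $\chi(1)\cdot$(multiplicity) bounded, we use $\dim(\C)=\sum f_\chi^{-1}\chi(1)^2\dots$; more precisely, we combine with the standard identity $\sum_\chi \chi(1)^2=\mathrm{rank}(\C)$. The options are therefore $K=K_3$ with trace $9$ or $10$, or $K=L$ or $\Q(\sqrt{13})$ with trace $11$ or $12$.

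The main obstacle is excluding these options. We will enumerate the finitely many totally positive units of trace $\le 12$ in subfields of $\mathbb{Q}(\zeta_{13})^+$, using Proposition \ref{subfield_trace_formula}, since the units are squares by hypothesis. For $K_3$, a trace of $9$ forces the variance-minimal configuration $T^2=1$, which identifies $\eta^2$ up to conjugacy as $\varepsilon_3$. A trace of $10$ would need $pV+T^2/3=10$, which is impossible because $V\in\{2/3,2,\dots\}$. For $L$ and $\Q(\sqrt{13})$, a trace of $11$ or $12$ is excluded by norm/integrality checks on $p k^2\pm2$ and on the formula. For example, in $\Q(\sqrt{13})$ the value $11=13-2$ arises from $\eta=(3+\sqrt{13})/2$. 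We would exclude it using the requirement that $\mathrm{FPdim}(\C)=13u_0$ be the largest codegree and that the remaining terms $u=1$ give integer codegrees $13$, which by \cite{Ostrik2015} correspond to $\chi(1)=1$ and hence to a pointed part. This part needs care, and it is where we expect difficulty.

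In the surviving case, $u_0^{-1}$ is conjugate to $\varepsilon_3$ with minimal polynomial $T^3-9T^2+14T-1$. Then $u_0$ has the reciprocal polynomial $T^3-14T^2+9T-1$, and $\mathrm{FPdim}(\C)=13\alpha$ with $\alpha$ its largest root. This orbit contributes $3$ one-dimensional representations, and the rest is $13-9=4$ copies of $u=1$ with $\chi(1)=1$. Hence the number of $\chi$ is $3+4=7$, all with $\chi(1)=1$, so $\mathrm{Gr}(\C)$ is commutative and $\mathrm{rank}(\C)=7$.
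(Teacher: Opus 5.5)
Your first step works, and it differs from the paper. The paper takes $d_f=3$ from \cite[Example 5.7]{Schopieray2021} and then uses GAP. You instead use Corollary \ref{1-2}: every nontrivial orbit contributes at least $9$, so the FPdim orbit is the only orbit with $u\neq 1$. You should add that this orbit has exactly $[\Q(u_0):\Q]$ members, since a multiple would exceed $12$. In $K_3$, the equation $13V+T^2/3=9$ forces $V=2/3$ and $T=\pm1$, so $\eta=\pm E_i$, which replaces the GAP search. The proof breaks after that.

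The missing ingredient is \cite[Corollary 2.15]{Ostrik2015}: all formal codegrees lie in $\Q(\dim(X):X\in\mathcal{O}(\C))$. Hence $\dim(-)$ has at least $[\Q(u_0):\Q]$ Galois conjugates, each a one-dimensional representation with $f=13$. Without this, your final count is unjustified. The class equation only gives $\sum_{f_\chi=13}\chi(1)=4$, which also allows $\dim(-)$ together with a single $3$-dimensional $\chi$ with $f_\chi=13$, and then the rank is not $7$. Your suggested justification, that an integer codegree forces $\chi(1)=1$, is false in general: for $\mathrm{Vec}_G$ one has $f_\chi=|G|/\chi(1)\in\Z$ for every $\chi$. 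With Ostrik's corollary, the dim orbit contributes $3$ and the FPdim orbit $9$, leaving exactly one further one-dimensional $\chi$. That gives seven one-dimensional irreducibles, hence commutativity and rank $7$; this is the paper's argument.

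Your exclusion of the remaining cases is also wrong. Trace $11$ does occur:
\begin{itemize}
\item in $L$, by $e_j^2$, since $e_j$ is a cyclotomic unit and $\Tr_{L/\Q}(e_j^2)=p-2$ by Lemma \ref{trace_matrix_lemma};
\item in $\Q(\sqrt{13})$, by $\bigl((3+\sqrt{13})/2\bigr)^2$.
\end{itemize}
So no norm or integrality check removes these cases. The $L$ case dies once you use Ostrik's corollary, since $6+11>13$. The quadratic case, however, gives $2+11=13$ and is consistent with the class equation, so it needs a separate input. The paper gets it from \cite[Example 5.7]{Schopieray2021} (compare \cite[Proposition 3.6]{Yu2023} in the proof of Theorem \ref{maintheorem2}). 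Your ``pointed part'' idea would not give a contradiction even if its premise held.

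Finally, trace $10$ in $K_3$ is not excluded by the variance formula: $V=2/3$ with $T=\pm2$ gives exactly $10$, realized by $\eta=\pm(1+E_i)$. It is excluded only because $N_{K_3/\Q}(1+E_i)=-5$, so $1+E_i$ is not a unit.
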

\begin{proof}
Assume there exists a non-pointed spherical fusion category of global dimension $13$. Let  $f$ be a formal codegree such that $f\neq 13$, denote $d_f:=[\mathbb{Q}(f):\mathbb{Q}]$, it follows from 
\cite[Example 5.7]{Schopieray2021} that $d_f=3$. By using the program GAP \cite{GAP}, we can know that there exists a unique irreducible polynomial  $T^3-14T^2+9T-1$ that passes the criterion given in \cite[Example 5.7]{Schopieray2021}. Since $\C$ is not pointed, $\mathrm{FPdim}(\C)\neq 13$ \cite[Corollary 8.30]{ENO2005}, we know $\mathrm{FPdim}(\C)=13\alpha$, where $\alpha$ is the largest root of $T^3-14T^2+9T-1$, then the Equation \eqref{classeqt}
 shows the formal codegrees of $\C$ are $13, 13,13,13,\mathrm{FPdim}(\C)$ and its two Galois conjugates. Notice that the Galois conjugates of $\dim(-)$ and $\mathrm{FPdim}(-)$ are  six different ring homomorphisms of $\mathrm{Gr}(\C)\otimes_\Z\mathbb{C}$, therefore, $\C$ must has commutative fusion rules and $\mathrm{rank}(\C)=7$. 
 \end{proof}
 Let 
$
L=\mathbb Q(\zeta_{13})^+$, define
$
\eta_0:=e_1+e_5,
\eta_1:=e_2+e_3,
\eta_2:=e_4+e_6.
$
The identities
$
e_ie_j=e_{i+j}+e_{i-j},
1+e_1+\cdots+e_6=0
$
give
\begin{equation}\label{(1)}
\eta_0+\eta_1+\eta_2=-1,
\quad
\eta_j^2=4+\eta_{j+1}+2\eta_{j+2},
\end{equation}
where the subscripts on the \(\eta_j\) are taken modulo \(3\).
In particular,
\begin{equation}\label{(2)}
\eta_0^2+\eta_1^2+\eta_2^2=9,
\quad
\prod_{j=0}^{2}(T-\eta_j)=T^3+T^2-4T+1, \quad \prod_{j=0}^{2}(T-\eta_j^{-2})
=T^3-14T^2+9T-1,
\end{equation}
thus \(\eta_0\) is a unit, with
$
\eta_0^{-1}=4-\eta_0-\eta_0^2$, and $\alpha=\eta_0^{-2}$.
\begin{proposition}\label{13dimension}
Let $\C$ be a spherical fusion category of global dimension $13$. If $\C$ is not pointed, then the Frobenius-Perron dimensions of simple objects of $\C$ are $1,u,u,u,u,v,w$, where $
u=\eta_0^{-1},
v=\eta_1\eta_0^{-1},
w=-\eta_2\eta_0^{-1}$.
\end{proposition}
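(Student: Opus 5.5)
The plan is to pin down the Frobenius--Perron dimensions of the simple objects one at a time, using column orthogonality for the commutative semisimple algebra $\mathrm{Gr}(\C)\otimes_\Z\mathbb{C}$ together with the trace quadratic form of Proposition \ref{subfield_trace_formula} on $K_3=\mathbb{Q}(\eta_0)$.

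By Lemma \ref{13ranklemma}, $\C$ has commutative fusion rules and rank $7$. Its seven characters are four characters $\chi_1,\dots,\chi_4$ with $f_\chi=13$ (the character $\dim$ and its conjugates), together with $\mathrm{FPdim}$ and its two Galois conjugates, whose codegrees are $13\alpha,13\alpha',13\alpha''$. Here $\alpha=\eta_0^{-2}$ by \eqref{(2)}. Writing $1=\sum_\chi \alpha_\chi/f_\chi$ and comparing the coefficient of $\mathbf 1$ in $X\otimes X^*$ gives, for every simple $X$,
\begin{equation*}
\sum_{\chi}\frac{|\chi(X)|^2}{f_\chi}=1 .
\end{equation*}
The $\mathrm{FPdim}$-orbit has size $3$ and its codegrees generate $K_3$, so I expect $x:=\mathrm{FPdim}(X)\in\mathcal{O}_{K_3}$. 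The conjugate characters then send $X$ to $\sigma(x)$, which is real since $K_3$ is totally real. Using $\alpha^{-1}=\eta_0^2$, the identity becomes
\begin{equation*}
\sum_{i=1}^4|\chi_i(X)|^2+\Tr_{K_3/\mathbb{Q}}\big((x\eta_0)^2\big)=13 .
\end{equation*}
The first sum is nonnegative, and it is at least $1$ because $\dim(X)\neq0$. Hence $\Tr_{K_3/\mathbb{Q}}(y^2)\le 12$ for $y:=x\eta_0\in\mathcal O_{K_3}$.

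I will write $y=\sum_i d_iE_i$ in the period basis $E_i\in\{\eta_0,\eta_1,\eta_2\}$ and apply \eqref{master_trace_eq} with $p=13$, $n=3$: $\Tr(y^2)=13V+T^2/3$. The bound $\Tr(y^2)\le12$ leaves two possibilities.
\begin{itemize}
\item If $V=0$, then $y=-d$ and $\Tr(y^2)=3d^2$, so $y\in\{\pm1,\pm2\}$.
\item If $V=2/3$ and $T=\pm1$, then $\Tr(y^2)=9$. This means $y=\pm\eta_j$, or equivalently $y=\pm(\eta_j+\eta_k)=\mp(1+\eta_l)$ up to the shift by the all-ones vector.
\end{itemize}
The candidates are therefore $x\in\{\pm\eta_0^{-1},\pm2\eta_0^{-1},\pm\eta_j\eta_0^{-1},\pm(1+\eta_l)\eta_0^{-1}\}$. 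I will discard those that are not $\ge1$ under the Frobenius--Perron embedding, where $\eta_0$ is chosen so that $\alpha=\eta_0^{-2}$ is the largest root. For $X=\mathbf 1$ only $x=1$ occurs; there $y=\eta_0$, which is consistent.

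Finally, the remaining six simple objects must satisfy $\sum x^2=13\alpha-1=13\eta_0^{-2}-1$. The dimension character must also satisfy $\sum\dim(X)^2=13$, with the leftover budget $13-\Tr(y^2)$ for each $X$ matching $\sum_i|\chi_i(X)|^2$. Multiplying the first identity by $\eta_0^2$ gives
\begin{equation*}
\sum y^2=13-\eta_0^2 ,
\end{equation*}
which I will solve over the finite candidate set of squares $\{1,4,\eta_j^2,(1+\eta_l)^2\}$ using the relations \eqref{(1)}. I expect this to force four objects with $y^2=1$ and one each with $y^2=\eta_1^2$ and $y^2=\eta_2^2$. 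Fixing signs by positivity then gives $u,u,u,u,v,w$ with $v=\eta_1\eta_0^{-1}$ and $w=-\eta_2\eta_0^{-1}$.

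The main obstacle is this last matching step. I need to show, via the linear independence of $1,\eta_0,\eta_1$ and the relation $\eta_0+\eta_1+\eta_2=-1$, that no other multiset of six candidates sums correctly. I also need to use the leftover budgets ($12$ versus $4$) to rule out values like $2\eta_0^{-1}$. A secondary point to justify carefully is that $\mathrm{FPdim}(X)\in K_3$, which I expect to follow because the Galois orbit of the $\mathrm{FPdim}$ character has exactly three elements.
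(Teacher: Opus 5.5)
Your proof is correct, but it takes a different route from the paper's.

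\textbf{Comparison with the paper.} The paper uses only the aggregate identity $\sum_i b_i^2=13-\eta_0^2$.
\begin{enumerate}[label=(\roman*)]
\item It expands the $b_i$ in the basis $e_j$ of $\mathcal{O}_L$, which presupposes $b_i\in L$; the paper does not justify this.
\item It bounds $\Tr_{L/\mathbb{Q}}(b_i^2)\le 30$.
\item It uses the automorphism $\tau$ fixing $\eta_0$, together with maximality of $\mathrm{FPdim}$ among its conjugates, to get $\tau(b_i)=\pm b_i$. The minus sign is excluded via the value $26(r^2+s^2+t^2)$.
\item Only then does it work inside $K_3$, with a counting argument and the fact that $6$ is a non-residue modulo $13$.
\end{enumerate}

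You instead apply column orthogonality $\sum_\chi|\chi(X)|^2/f_\chi=1$ to each simple object separately. This buys two things.
\begin{itemize}
\item It gives $\mathrm{FPdim}(X)\in K_3$ directly. The character with codegree $13\alpha$ is unique, so it is fixed by $\Gal(\overline{\mathbb{Q}}/K_3)$. This supplies explicitly the rationality the paper leaves implicit.
\item It gives the per-object bound $\Tr_{K_3/\mathbb{Q}}(y^2)\le 12$. Here $\sum_i|\chi_i(X)|^2=13-\Tr(y^2)$ is a positive integer. In fact $\le 13$ already suffices, since no $y$ has trace exactly $13$.
\end{itemize}
The bound produces a short explicit candidate list, so the $\tau$-descent and the quadratic-residue step become unnecessary. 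The price is more character theory of the commutative fusion ring: orthogonality, $\chi(X^*)=\overline{\chi(X)}$, and $f_{\sigma\circ\chi}=\sigma(f_\chi)$. The paper needs only the dimension equation and Frobenius--Perron maximality.

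\textbf{Two small remarks.}

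First, in the case $V=2/3$ you must also allow $T=\pm2$. This gives $\Tr(y^2)=10$ and $y=\pm(1+\eta_l)$. It is not ``equivalent up to the all-ones shift'' to $y=\pm\eta_j$, because that shift changes $T$ by $3$. Your candidate list already contains $\pm(1+\eta_l)$, so nothing is lost.

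Second, the step you flag as the main obstacle is short.
\begin{itemize}
\item The traces of $1,4,\eta_j^2,(1+\eta_l)^2$ are $3,12,9,10$, and the six nontrivial objects must have total trace $\Tr(13-\eta_0^2)=30$.
\item Counting gives $9n_4+6n_\eta+7n_{1+\eta}=12$. This forces $n_1=4$, $n_\eta=2$ and $n_4=n_{1+\eta}=0$.
\item The two remaining squares then satisfy $\eta_j^2+\eta_k^2=9-\eta_0^2=\eta_1^2+\eta_2^2$ by \eqref{(2)}.
\item In the basis $\eta_0,\eta_1,\eta_2$ the squares $\eta_0^2,\eta_1^2,\eta_2^2$ have coordinates $(-4,-3,-2)$, $(-2,-4,-3)$, $(-3,-2,-4)$. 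Comparing coordinates forces $\{j,k\}=\{1,2\}$.
\end{itemize}
Positivity then fixes the signs. The leftover budgets are not needed.
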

\begin{proof}
By Lemma \ref{13ranklemma}, $\mathrm{rank}(\C)=7$ and $\mathrm{FPdim}(\C)=13\alpha$. Write the simple objects as
$
\mathbf1,V_1,\ldots,V_6
$, and  let $
d_i=\operatorname{FPdim}(V_i)$, then we have
$
\mathrm{FPdim}(\C)=1+\sum_{i=1}^{6}d_i^2=13\eta_0^{-2}$. Therefore, the positive algebraic integers $
b_i=\eta_0d_i$
satisfy 
\begin{equation}\label{(4)}
\sum_{i=1}^{6}b_i^2=13-\eta_0^2.
\end{equation}

By \cite[Corollary 2.2.1]{Davis1969}, $\{e_1,e_2,e_3,e_4,e_5,e_6\}$ is a \(\mathbb{Z}\)-basis of \(\mathcal{O}_L\). For \(b=\sum_{j=1}^{6}n_je_j\in\mathcal O_L\) with $n_j\in\mathbb{Z}$, by Equation \eqref{trace_L_formula}, after expanding and simplifying, we have
\begin{equation}\label{(5)}
\Tr_{L/\mathbb{Q}}(b^2)
=13\sum_{j=1}^{6}n_j^2
  -2\left(\sum_{j=1}^{6}n_j\right)^2
=\sum_{j=1}^{6}n_j^2
  +2\sum_{1\leq i<j\leq6}(n_i-n_j)^2.
\end{equation}
If all \(n_j=m\), then \(b=-m\) and \(\Tr_{L/\mathbb{Q}}(b^2)=6m^2\).
Otherwise, 
there are at least five pairs of unequal coordinates, so $\Tr_{L/\mathbb{Q}}(b^2)\geq11$.
Therefore, for every nonzero element \(b\in\mathcal O_L\),
\begin{equation}\label{(6)}
\Tr_{L/\mathbb{Q}}(b^2)=6\quad\text{if }b=\pm1,
\qquad
\Tr_{L/\mathbb{Q}}(b^2)\geq11\quad\text{otherwise}.
\end{equation}
Since \(\Tr_{L/\mathbb{Q}}(\eta_0^2)=18\), 
$\sum_{j=1}^{6}\Tr_{L/\mathbb{Q}}(b_j^2)=60$ by Equation \eqref{(4)}, so \(\Tr_{L/\mathbb{Q}}(b_j^2)\leq30\) for all $j$.

Let $\tau\in \mathrm{Gal}(L/\Q)$ such that $\tau(\zeta_{13})=\zeta_{13}^5$, then $\tau(e_1)=e_5$, $\tau(e_2)=e_3$, $\tau(e_4)= e_6$ and $\tau(\eta_0)=\eta_0$.
 Hence, we obtain
\[
1+\sum_{i=1}^6\tau(d_i)^2
=13\eta_0^{-2}
=1+\sum_{i=1}^6 d_i^2.\]
Note $|\tau(d_j)|\leq d_j$, as the Frobenius-Perron dimension of object are the largest among its Galois conjugates \cite[Proposition 3.3.6]{EGNO},  we have $
\tau(b_j)=\pm b_j$ for all $j$. 
If \(\tau(b_i)=-b_i\) for some $i$, its coefficient vector in the integral
basis of $\mathcal{O}_L$ has the form $
(r,s,-s,t,-r,-t)$,
so Equation \eqref{(5)} gives
\begin{equation*}
\Tr_{L/\mathbb{Q}}(b_i^2)=26(r^2+s^2+t^2),
\end{equation*}
 which would force \(\Tr_{L/\mathbb{Q}}(b_i^2)=26\) as \(\Tr_{L/\mathbb{Q}}(b_i^2)\leq30\).
The other five traces would then sum to \(34\), which then contradicts
Inequality \eqref{(6)}.
Hence,
$
\tau(b_j)=b_j$ for all $1\leq j\leq 6$.

Meanwhile, for any $b\in\mathcal{O}_L$, if $\tau(b)=b$, then 
$b=r\eta_0+s\eta_1+t\eta_2$, $r,s,t\in\mathbb Z$, and 
\begin{equation}\label{(10)}
\Tr_{L/\mathbb{Q}}(b^2)
=
2\left(
r^2+s^2+t^2
+4\bigl((r-s)^2+(s-t)^2+(t-r)^2\bigr)
\right).
\end{equation}
If $\tau(b)=b$, $b\neq \pm1$ and  \(r,s,t\) are not all equal, we have
\begin{equation}\label{(11)}
\Tr_{L/\mathbb{Q}}(b^2)\geq18,
\end{equation}
with equality occurs precisely for $
b\in\{\pm\eta_0,\pm\eta_1,\pm\eta_2\}$, equivalently
\(r^2+s^2+t^2=1\). Let \(k\) be the number of \(b_j\) different from $1$.
Inequality \eqref{(11)} implies $
60\geq6(6-k)+18k$,
so \(k\leq2\).
The case \(k=0\) would give total trace \(36\).
If \(k=1\), the remaining element would have trace \(30\).
However, by Equation \eqref{(10)}, this would imply $
13(r^2+s^2+t^2)-4(r+s+t)^2=15$,
 hence $(r+s+t)^2\equiv6\pmod{13}$,
which is impossible as \(6\) is not a quadratic residue
modulo \(13\).
Thus \(k=2\), and traces of both these two exceptional elements (denote by $b,c$) are  \(18\). Then the preceding argument shows
$b,c\in\{\pm\eta_0,\pm\eta_1,\pm\eta_2\}$.
Since all \(b_j\) are positive,  a direct computation shows that 
$b=\eta_1,c=-\eta_2
$. Denote
$
u=\eta_0^{-1},
v=\eta_1\eta_0^{-1},
w=-\eta_2\eta_0^{-1}, 
$
the Frobenius-Perron dimensions of the simple objects are $1,u,u,u,u,v,w$, as claimed.
\end{proof}

Since $v=\eta_1\eta_0^{-1}$ and $
w=-\eta_2\eta_0^{-1}$, we know $v=u^2-2u-1$ and $w=u^2-u$. 
\begin{theorem}\label{13theorem}
There is no spherical fusion category \(\mathcal C\) satisfying
$
\operatorname{rank}(\mathcal C)=7, \dim(\mathcal C)=13$ and $\operatorname{FPdim}(\mathcal C)=13\alpha$. That is, spherical fusion categories of global dimension $13$ are pointed. 
\end{theorem}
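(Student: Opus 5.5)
The plan is to derive a contradiction purely at the level of the Grothendieck ring. We use the explicit dimensions from Proposition \ref{13dimension} and the arithmetic of the cubic field $\mathbb{Q}(u)=\mathbb{Q}(\eta_0)$.

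\textbf{Step 1: dimension arithmetic.} Since $\eta_0$ is a root of $T^3+T^2-4T+1$, the element $u=\eta_0^{-1}$ is a root of $T^3-4T^2+T+1$, so
\[
u^3=4u^2-u-1 .
\]
Thus $\{1,u,u^2\}$ is a $\mathbb{Q}$-basis of $\mathbb{Q}(u)$. In this basis
\[
u=(0,1,0),\qquad v=u^2-2u-1=(-1,-2,1),\qquad w=u^2-u=(0,-1,1).
\]
Numerically $u\approx 3.65$, $v\approx 5.03$ and $w\approx 9.68$. The simple objects are $\mathbf 1$, $X_1,\dots,X_4$ of dimension $u$, $V$ of dimension $v$ and $W$ of dimension $w$. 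Since $v\neq w$ and these two values are not repeated, $V\cong V^*$ and $W\cong W^*$. Duality permutes $\{X_1,\dots,X_4\}$, and by Lemma \ref{13ranklemma} the fusion rules are commutative.

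\textbf{Step 2: a finite list of candidate fusion rules.} For each product $Y\otimes Z$ of simple objects we write
\[
\mathrm{FPdim}(Y)\,\mathrm{FPdim}(Z)=n_0+n_u u+n_v v+n_w w ,
\]
where $n_0=\delta_{Z,Y^*}$, $n_u$ is the total multiplicity of the $X_i$, and $n_v,n_w$ are the multiplicities of $V,W$, all nonnegative integers. Comparing coordinates in the basis $\{1,u,u^2\}$ gives three linear equations. For instance, $X\otimes X^*$ has dimension $u^2$, and solving forces
\[
X\otimes X^*=\mathbf 1\oplus(\text{two }X_j\text{'s})\oplus V .
\]
We do the same for $X_i\otimes X_j$ with $j\neq i^*$ (dimension $u^2$, no unit), and for $X\otimes V$, $X\otimes W$, $V\otimes V$, $V\otimes W$, $W\otimes W$ (reducing $u^3$ and $u^4$ via the cubic relation). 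Each system has only finitely many, and typically very few, nonnegative solutions. Frobenius reciprocity $N_{YZ}^{U}=N_{Y^*U}^{Z}=N_{U^*Y}^{Z^*}$ then ties these decompositions together. For example, $V$ occurs in $X\otimes X^*$ exactly once for each $X$, so $X$ occurs in $V\otimes X$ once. Using this, together with the four possible duality patterns on $\{X_i\}$ (all self-dual, one dual pair, or two dual pairs), we reduce to a finite list of candidate fusion matrices $N_Y$.

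\textbf{Step 3: kill each candidate.} For each surviving candidate we check associativity ($N_YN_Z=N_ZN_Y$ with the correct structure constants). For the ones that survive, we compute the seven characters of the commutative ring and compare with Lemma \ref{13ranklemma}. The formal codegrees must be exactly $13,13,13,13$ together with $13\alpha$ and its two conjugates. In particular there must be four one-dimensional characters $\chi$ with $\sum_Y|\chi(Y)|^2=13$, realized by the spherical dimension function and its Galois conjugates. These values of $\chi$ lie in $\mathbb{Q}(\zeta_{13})^+$ and have squares summing to $13$. By inequality \eqref{(6)}-type trace bounds, that is a very rigid condition, and we expect it to fail for every candidate. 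If some candidate still survives, the fallback is to check the Ostrik-type integrality condition: $\dim(\mathcal C)/f_\chi$ must be a totally positive algebraic integer for the character-sum expression of each formal codegree.

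The main obstacle is Step 2–3: organizing the enumeration of fusion rings so that it is genuinely finite and short. The products $X_i\otimes X_j$ are the delicate part, because the $u$-multiplicity $n_u$ must be split among four objects. I would first pin down $V\otimes X_i$ and $W\otimes X_i$ via reciprocity from the already forced $X\otimes X^*$, and only then distribute the remaining $X_j$'s, so that commutativity and associativity cut the possibilities quickly.
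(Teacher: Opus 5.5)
Your Step 1 and the first decompositions in Step 2 are correct. For example, $X\otimes X^*=\mathbf 1\oplus(\text{two }X\text{'s})\oplus V$, and $X_i\otimes X_j=(\text{one }X)\oplus W$ for $j\neq i^*$. But the proposal stops exactly where the proof has to happen. You never produce the list of candidate fusion rings, you never eliminate a single candidate, and Step 3 ends with ``we expect it to fail for every candidate.'' As written, nothing has been proved.

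The tests you line up in Step 3 are also not ones that would reliably finish the job:
\begin{itemize}
\item You give no reason why the four characters of codegree $13$ should be $\dim$ and its Galois conjugates. The number of conjugates of $\dim$ is the degree of the field generated by the $\dim(X)$, and nothing makes it $4$.
\item You give no reason why the character values should lie in $\mathcal O_L$, so that (6) would apply.
\item The Ostrik integrality test is vacuous here: $13/13=1$, and for every conjugate $\alpha'$ of $\alpha$, $13/(13\alpha')$ is a conjugate of $\eta_0^2$, a totally positive unit.
\end{itemize}

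The gap can be closed along your own lines, and this would give a genuinely different, purely combinatorial proof. FPdim and reciprocity already force every product except the $X$-parts of $X_iX_j$. In $\mathrm{Gr}(\C)$ one gets $X_iV=X_i+V+W$, $X_iW=\sum_{k\neq i}X_k+V+2W$, $V^2=\mathbf 1+\sum_kX_k+W$, $VW=\sum_kX_k+V+3W$ and $W^2=\mathbf 1+2\sum_kX_k+3V+5W$.

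Encode the remaining freedom by the totally symmetric array $M(i,j,k)=\dim\mathrm{Hom}(\mathbf 1,X_iX_jX_k)$. It satisfies $\sum_kM(i,j,k)=2$ if $j=i^*$ and $1$ otherwise, and $M(i,j,k)=M(i^*,j^*,k^*)$. These constraints should leave only a few solutions in each duality pattern, and each one should fail associativity. For instance, whenever some $X$ is not self-dual, one is forced (after relabelling, with $X_2=X_1^*$) to $X_1X_1=X_2+W$, $X_1X_2=\mathbf 1+X_3+X_4+V$ and $X_1X_3=X_1X_4=X_1+W$. Then $(X_1X_1)X_2=2X_1+X_3+X_4+V+3W\neq 4X_1+V+3W=X_1(X_1X_2)$. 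The all-self-dual pattern should likewise die under similar short associativity checks. All of this must be written out to count as a proof.

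The paper sidesteps fusion rules entirely. Write $y=\dim V$, $z=\dim W$ and $S=\sum_i\dim X_i$. The three Galois conjugates $\phi_j$ of $\mathrm{FPdim}$ are characters distinct from $\dim$, so $\sum_X\dim(X)\phi_j(X^*)=0$. This says the quadratic $(y+z)T^2+(S-2y-z)T+(1-y)$ vanishes at the three distinct roots $u_j$, so it is identically zero. Hence $y=1$, $z=-1$ and $S=1$, so $\sum_i\dim(X_i)^2=10$. Then $\sum_{i<j}\dim X_i\dim X_j=-9/2$, which is not an algebraic integer. That orthogonality relation is the key idea missing from your plan; it replaces the whole enumeration by a few lines.
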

\begin{proof}
By Proposition \ref{13dimension}, the Frobenius-Perron dimensions of simple objects are $1,u,u,u,u,v,w$. Without loss of generality, assume $\text{FPdim}(V_i)=u$ for $1\leq i\leq 4$, $\text{FPdim}(V_5)=v$ and $\text{FPdim}(V_6)=w$. Let $g(T)=T^3-4T^2+T+1$, then $g(T)$ is irreducible over $\mathbb Q$ and $g(u)=0$. Let $u_1,u_2,u_3$ be the three distinct
roots of $g(T)$, and let $\phi_j$ be the homomorphisms that are conjugate to $\mathrm{FPdim}(-)$, then for any simple object $V$ of $\C$, we have $\phi_j(V^*)=\phi_j(V)$ for $1\leq j\leq 3$, also  $\phi_j\neq \dim(-)$ as $\C$ is  not a pseudo-unitary fusion category. 

Let 
$x_i=\dim(V_i)$, $1\leq i\leq 4$, $ y=\dim(V_5),
z=\dim(V_6),
S=\sum_{i=1}^{4}x_i$.
Then \cite[Lemma 8.14.1]{EGNO} shows that $\sum_{V\in\mathcal{O}(\C)}\dim(V)\phi_j(V^*)=0$ for $j=1,2,3$, that is,
\begin{equation*}
1+Su_j+y(u_j^2-2u_j-1)+z(u_j^2-u_j)=0,\quad j=1,2,3.
\end{equation*}
Thus the polynomial
$h(T):=
(y+z)T^2+(S-2y-z)T+(1-y)
$
vanishes at the three distinct roots $u_j$ of $g(T)$.
The degree  of $h(T)$ is at most two, so it is identically zero.
Consequently,
\begin{equation*}
y=1,\quad z=-1,\quad S=1.
\end{equation*}
Meanwhile,
$\dim(\C)=1+\sum_{i=1}^{4}x_i^2+y^2+z^2,
$
hence $\sum_{i=1}^4 x_i^2=10$. Therefore,  
\begin{equation*}
\sum_{1\leq i<j\leq4}x_ix_j
=
\frac{\left(\sum_{i=1}^4 x_i\right)^2-\sum_{i=1}^4 x_i^2}{2}
=-\frac{9}{2},
\end{equation*}
which is impossible, as the left-hand side is an algebraic integer. Therefore, spherical fusion category of global dimension $13$ is pointed.
\end{proof}

We now complete the proof of Theorem~\ref{maintheorem2} by using Corollary \ref{1-2} and Theorem \ref{13theorem}.

\begin{proof}[Proof of Theorem \ref{maintheorem2}]
Let $\mathcal{C}$ be a spherical fusion category of global dimension $p=4q+1$. Assume that $2$ is a primitive root modulo $q$, or
 $p=4q+1<2^{22}$ and $p\neq 29$ below. If $q=3$, this is proved in Theorem \ref{13theorem}. By assumption, we have $q>7$. 

It is proved in \cite[Lemma 5.1, Corollary 5.3]{Schopieray2021} that $\mathrm{FPdim}(\mathcal{C})=p\cdot u_{\mathrm{FPdim}(\mathcal{C})}$, where $u_{\mathrm{FPdim}(\mathcal{C})}\in \mathbb{Q}(\zeta_p)^+$ is a totally positive  algebraic unit, then  $d_{\mathrm{FPdim(\mathcal{C})}} \in\{1,2,q,2q\}$.  If  $d_{\mathrm{FPdim(\mathcal{C})}} =1$, then $\mathcal{C}$ is pointed, we are done.
It follows from \cite[Proposition 3.6]{Yu2023} that   $d_{\mathrm{FPdim(\mathcal{C})}} \neq 2,2q$. Assume  $d_{\mathrm{FPdim(\mathcal{C})}} =q$ below. Notice that all the formal codegrees of $\mathcal{C}$ belong to the field $\mathbb{Q}(\dim(X),X\in\mathcal{O}(\mathcal{C})
)$ by \cite[Corollary 2.15]{Ostrik2015},  so the ring homomorphism $\dim(-)$   has at least $q$
Galois conjugates, which means that $\C$ has at least $q$ formal codegrees being $p$, thus the Equation  \eqref{classeqt} implies 
\begin{equation*}
\Tr_{\mathbb{Q}(u_{\mathrm{FPdim(\mathcal{C})}})/\mathbb{Q}}(u^{-1}_{\mathrm{FPdim(\mathcal{C})}})\leq p-q.
\end{equation*}
However, $\Tr_{\mathbb{Q}(u_{\mathrm{FPdim(\mathcal{C})}})/\mathbb{Q}}(u^{-1}_{\mathrm{FPdim(\mathcal{C})}}) \geq p-4$ by Corollary \ref{1-2}, which is a contradiction as $q > 7$. 
\end{proof}

\begin{remark}
    We remark that the method in Proposition \ref{13dimension} can't be applied to the classification of spherical fusion category $\C$ of global dimension $29$. Indeed, if $\C$ is not pointed, it follows from Remark \ref{remarkp=4q+1} that  $
    \Tr_{\mathbb{Q}(u_{\mathrm{FPdim(\mathcal{C})}})/\mathbb{Q}}(u^{-1}_{\mathrm{FPdim(\mathcal{C})}}) \geq 16$, so new methods are definitely required to classify spherical fusion categories of prime global dimension.
\end{remark}

\subsection*{Acknowledgements}
The authors thank A. Schopieray for communications on spherical fusion category of prime global dimension and comments on an early draft, which improve this manuscript substantially.  Z. Yu  is supported
by NSFC (no.12571041) and Qinglan Project of Yangzhou University.
The authors acknowledge the use of AI as an auxiliary tool during the preparation of this manuscript, including for mathematical discussion and editorial assistance. All
mathematical content was independently verified by the authors, who take full responsibility for
the final manuscript.

\author{Yue Meng\\ \thanks{Email:\,DX120250050@stu.yzu.edu.cn}
\\{\small School  of Mathematics,  Yangzhou University,
Yangzhou 225002, China}
}\\

\noindent \author{Zhiqiang Yu\\ \thanks{Email:\,zhiqyumath@yzu.edu.cn}
\\{\small School  of Mathematics,  Yangzhou University,
Yangzhou 225002, China}
}

\end{document}